\numberwithin{equation}{section}
\numberwithin{figure}{section}
\theoremstyle{plain}
\newtheorem{thm}{Theorem}[section]
\theoremstyle{plain}
\newtheorem{lem}[thm]{Lemma}
\theoremstyle{remark}
\theoremstyle{plain}
\newtheorem{cor}[thm]{Corollary}
\theoremstyle{definition}
\newtheorem{defn}[thm]{Definition}
\theoremstyle{definition}
\theoremstyle{definition}
\theoremstyle{plain}
\newtheorem{prop}[thm]{Proposition}
\theoremstyle{plain}
\theoremstyle{definition}
\theoremstyle{plain}
\newtheorem{lemma}[thm]{Lemma}
\newtheorem{proposition}[thm]{Proposition}
\newtheorem{question}[thm]{Question}
\theoremstyle{definition}
\newtheorem{notation}[thm]{Notation}
\newtheorem{example}[thm]{Example}
\newtheorem{remark}[thm]{Remark}
\newtheorem*{ac}{Acknowledgements} 
\DeclareMathOperator{\Tr}{Tr}
\newcommand{\id}{\mathrm{id}} 
\newcommand{\End}{\mathrm{End}}
\newcommand{\ev}{\mathrm{ev}}
\newcommand{\coev}{\mathrm{coev}}
\newcommand{\trace}{\mathrm{tr}}
\newcommand{\VVec}{\mathrm{Vec}}
\newenvironment{restatetheorem}[1]
{
 \par\addvspace{0.25\baselineskip}
 \noindent\textbf{Theorem \ref{#1}.}\itshape
}
{
 \par\addvspace{0.25\baselineskip}
}
\newenvironment{restatecorollary}[1]
{
 \par\addvspace{0.25\baselineskip}
 \noindent\textbf{Corollary \ref{#1}.}\itshape
}
{
 \par\addvspace{0.25\baselineskip}
} 
\newcommand{\comments}[1]{}
\newcommand{\mcal}{\mathcal}
\newcommand{\C}{\mathbb{C}}
\newcommand{\vlon}{\varepsilon}
\newcommand{\ot}{\otimes}
\title{Exchange relations and Frobenius subalgebras}
\author{Mainak Ghosh}
\address{M. Ghosh, Hetao Institute of Mathematics and Interdisciplinary Sciences, Futian District, Shenzhen, China}
\email{ghosh.main@gmail.com}
\author{Sebastien Palcoux}
\address{S. Palcoux, Beijing Institute of Mathematical Sciences and Applications, Huairou District, Beijing, China}
\email{sebastien.palcoux@gmail.com}
\urladdr{https://sites.google.com/view/sebastienpalcoux}
\def\blfootnote{\gdef\@thefnmark{}\@footnotetext}
\begin{document}
 \global\long\def\vlon{\varepsilon}
 \global\long\def\bt{\bowtie}
 \global\long\def\ul#1{\underline{#1}}
 \global\long\def\ol#1{\overline{#1}}
 \global\long\def\norm#1{\left\|{#1}\right\|}
 \global\long\def\os#1#2{\overset{#1}{#2}}
 \global\long\def\us#1#2{\underset{#1}{#2}}
 \global\long\def\ous#1#2#3{\overset{#1}{\underset{#3}{#2}}}
 \global\long\def\t#1{\text{#1}}
 \global\long\def\lrsuf#1#2#3{\vphantom{#2}_{#1}^{\vphantom{#3}}#2^{#3}}
 \global\long\def\tr{\triangleright}
 \global\long\def\tl{\triangleleft}
 \global\long\def\cc90#1{\begin{sideways}#1\end{sideways}}
 \global\long\def\turnne#1{\begin{turn}{45}{#1}\end{turn}}
 \global\long\def\turnnw#1{\begin{turn}{135}{#1}\end{turn}}
 \global\long\def\turnse#1{\begin{turn}{-45}{#1}\end{turn}}
 \global\long\def\turnsw#1{\begin{turn}{-135}{#1}\end{turn}}
 \global\long\def\fusion#1#2#3{#1 \os{\textstyle{#2}}{\otimes} #3}
 
 \global\long\def\abs#1{\left|{#1}\right |}
 \global\long\def\red#1{\textcolor{red}{#1}}

 \maketitle
 
 \begin{abstract}
 Bisch and Jones established a bijection between the intermediate subfactors of an irreducible subfactor and certain idempotents satisfying exchange relations. 
 In this paper, we generalize this result to Karoubian monoidal categories through Frobenius subalgebras.
 As an application, we show that certain morphisms on a C$^*$-correspondence arise from intermediate C$^*$-subalgebras if and only if they are averaging operators.

 \end{abstract}

 
 \blfootnote{MSC Classification : 18M20, 46L37 (Primary) 18M05, 46L05 (Secondary)}
 
 \section{Introduction}\label{introduction}
 
 A Frobenius algebra in $\VVec$, the category of finite-dimensional vector spaces, is a finite-dimensional unital algebra $A$ that is isomorphic to its dual $A^*$ as an $A$-module. Equivalently, it can be characterized by the existence of a non-degenerate associative bilinear form $\langle \cdot , \cdot \rangle$, as described in \cite[Chapter 4]{SY11}. Results of Quinn and Abrams \cite{Q95,A99} provide the following alternative characterization: a Frobenius algebra is a quintuple $(X,m,e,\delta,\epsilon)$, where $(A,m,e)$ and $(A,\delta,\epsilon)$ are a finite-dimensional algebra and coalgebra, respectively, subject to the condition
 \[
 \left( m \otimes 1_A \right) \circ \left(1_A \otimes \delta\right) = \delta \circ m = \left( 1_A \otimes m \right) \circ \left( \delta \otimes 1_A \right).
 \]
 This second definition allows the notion of Frobenius algebras to be extended to any monoidal category. In the operator algebraic framework, Frobenius algebras appear as the standard invariants of subfactors.
 A subfactor is a unital inclusion of factors. The modern theory of subfactors was initiated with V. Jones' landmark results in \cite{J83}. The standard invariant of a finite-index subfactor of a $\rm{II}_1$ factor was first defined as a $\lambda$-lattice in \cite{P95}. In \cite{L94, M03}, a Q-system—which is a unitary version of a separable Frobenius algebra object in a C*-tensor category or a C*-2-category—was introduced as an alternative axiomatization of the standard invariant of a finite-index subfactor.
 
 The study of intermediate subfactors of an irreducible subfactor was initiated by D. Bisch \cite{B94}, where the notions of \textit{biprojections} and \textit{exchange relations} first appeared. The notion of exchange relations has been extensively studied in subfactor theory \cite{B94, BJ00, L02, L16}. In \cite{BJ00}, they were defined for singly generated subfactor planar algebras, and later Z. Landau \cite{L02} extended them to subfactor planar algebras with multiple generators. Recently, exchange relations were generalized to monoidal categories through Frobenius algebras \cite[Theorem 3.19]{GP25}. 
 Note that a unital selfdual idempotent satisfying the exchange relations is not necessarily a biprojection (see \S\ref{sub:ERVec}).
 Biprojections on $M_n(\C)$ admit an elementary characterization (Lemma~\ref{lem:12345}). We describe all Frobenius subalgebras of $M_2(\C)$ (Proposition~\ref{prop:matrix_Frob_subalg}) and classify all the biprojections on it (Proposition~\ref{prop:bipro3}).
 
 A Banach algebra endomorphism satisfying the exchange relations is called an \emph{averaging operator} in \cite{M66} (Definition~\ref{def:average}). 
 Spectral properties of averaging operators have been studied extensively in~\cite{GM68}. 
 
 Bisch and Jones proved in \cite[Proposition 3.6]{BJ00} that there is a one-to-one correspondence between intermediate subfactors of an irreducible subfactor and unital projections satisfying exchange relations. The categorical analogues of an irreducible subfactor and its intermediate subfactors are a connected Frobenius algebra and its Frobenius subalgebras, respectively (see \cite{GP25}). Our main goal in this paper is to generalize the result of Bisch and Jones, using their foundational ideas, to Karoubian monoidal categories under a weaker assumption. The connectedness (irreducibility) assumption can be relaxed to unitality of the morphism. The main result is as follows:
 \begin{restatetheorem}{thm:main} 
 Let $X$ be a Frobenius algebra in a Karoubian monoidal category $\mcal C$. Then a unital selfdual idempotent $b: X \to X$ arises from a Frobenius subalgebra if and only if it satisfies the exchange relation.
 \end{restatetheorem}
 %
 \begin{restatecorollary}{connectedsubalgcor}
 Let $X$ be a connected Frobenius algebra in a Karoubian monoidal category $\mcal C$. Then a selfdual idempotent $b: X \to X$ arises from a Frobenius subalgebra if and only if it satisfies the exchange relation.
 \end{restatecorollary}
 %
 In the connected case, if a unital selfdual idempotent satisfies the exchange relations, then it is a biprojection (Definition \ref{def:bipro}), but the converse is unknown. 
 
 A normal biprojection (Definition \ref{def:normalbipro}), satisfies the exchange relations (Proposition \ref{normalERprop}), so arises from a Frobenius subalgebra (Corollary \ref{cor:normalbi}). 
 
 %
We now state a connected unitary analogue of \Cref{thm:main} (see Remark~\ref{rk:DualVsAdjoint}), which reformulates the above result of Bisch and Jones in the setting of unitary tensor categories.
 \begin{restatetheorem}{thm:uniMain}
 Let $X$ be a connected unitary Frobenius algebra in a unitary tensor category $\mcal C$. A unital self-adjoint idempotent $b: X \to X$ arises from a unitary Frobenius subalgebra if and only if it satisfies the exchange relations.
 \end{restatetheorem}
 %
 
 The study of exchange relations for inclusions of unital C*-algebras has not been previously undertaken. In this paper, we initiate this study. As an application of \Cref{thm:uniMain}, we can characterize averaging operators on C*-correspondences in terms of intermediate C*-subalgebras of irreducible finite-index unital inclusions of C*-algebras:
 
 \begin{restatecorollary}{cor:C*algthm}
 Let $(A \subseteq B, E)$ be an irreducible finite-index unital inclusion of C*-algebras. 
 Then an $A$-bimodule unital self-adjoint morphism $T : {_A}B_A \to {_A}B_A$ arises from an intermediate C*-subalgebra if and only if $T$ is an averaging operator.
 \end{restatecorollary}
 
 The structure of the paper is as follows. In \S\ref{prelim}, we recall the necessary background, including basic results, definitions, and pictorial notations. In \S\ref{biprojER}, we study biprojections and exchange relations for Frobenius algebras. Finally, in \S\ref{main}, we establish the main results.

 \section{Preliminaries}\label{prelim}
 In this section, we recall some basic definitions and highlight key results that will be used later. Let $\mcal C$ be a monoidal category with unit object $\mathbbm{1}$. We employ the graphical calculus for monoidal categories, interpreting diagrams from top to bottom.
 
 
 \begin{defn}
 A \textit{unital algebra} in $\mcal C$ is a triple $(X,m,e)$, where $X$ is an object of $\mcal C$, $m \in \text{Hom}_{\mcal C}(X \otimes X, X)$ is the multiplication morphism, and $e \in \text{Hom}_{\mcal C}(\mathbbm{1}, X)$ is the unit morphism, depicted as follows:
 \vspace*{-5mm} 
 \[m = \raisebox{-6mm}{
 \begin{tikzpicture}
 \draw[blue,in=-90,out=-90,looseness=2] (-0.5,0.5) to (-1.5,0.5);
 \draw[blue] (-1,-.1) to (-1,-.6);
 \node[left,scale=0.7] at (-1,-.4) {$X$};
 \node[left,scale=0.7] at (-1.6,0.5) {$X$};
 \node[right,scale=0.7] at (-.5,.5) {$X$};
 \end{tikzpicture}} \ \ \ \ \ \ \ \ \ \
 e= \raisebox{-6mm}{
 \begin{tikzpicture}
 \draw [blue] (-0.8,-.6) to (-.8,.6);
 \node at (-.8,.6) {${\color{blue}\bullet}$};
 \node[left,scale=.8] at (-.8,.5) {$\mathbbm{1}$};
 \node[left,scale=0.7] at (-.8,-.5) {$X$};
 \end{tikzpicture}}
 \vspace*{-2mm}
 \]
 These satisfy the following axioms:
 \begin{itemize}
 \item (Associativity) $m \circ (m \ot \text{id}_X) = m \circ (\text{id}_{X} \ot m)$,
 
 \item (Unitality) $m \circ (e \ot \text{id}_X) = \text{id}_X = m \circ (\text{id}_X \circ e)$,
 \end{itemize}
 depicted as follows:
 \vspace*{-3mm}
 \[\text{(associativity)} \ \ \ \ \raisebox{-6mm}{
 \begin{tikzpicture}[rotate=180,transform shape]
 \draw[blue,in=90,out=90,looseness=2] (0,0) to (1,0);
 \draw[blue,in=90,out=90,looseness=2] (0.5,.6) to (-.5,.6);
 \draw[blue] (-.5,.6) to (-.5,0);
 \draw[blue] (0,1.2) to (0,1.6);
 \end{tikzpicture}}
 = \raisebox{-6mm}{\begin{tikzpicture}[rotate=180,transform shape]
 \draw[blue,in=90,out=90,looseness=2] (0,0) to (1,0);
 \draw[blue,in=90,out=90,looseness=2] (.5,.6) to (1.5,.6);
 \draw[blue] (1.5,.6) to (1.5,0);
 \draw[blue] (1,1.2) to (1,1.6);
 \end{tikzpicture}} \]
 \vspace*{-2mm}
 \[ \text{(unitality)} \ \ \ \ 
 \raisebox{-4mm}{
 \begin{tikzpicture}[rotate=180,transform shape]
 \draw[blue,in=90,out=90,looseness=2] (0,0) to (1,0);
 \node at (1,0) {$\textcolor{blue}{\bullet}$};
 \draw[blue] (.5,.6) to (.5,1.2);
 \end{tikzpicture}}
 =
 \raisebox{-4mm}{
 \begin{tikzpicture}
 \draw[blue] (0,0) to (0,1.2);
 \end{tikzpicture}} 
 = 
 \raisebox{-4mm}{
 \begin{tikzpicture}[rotate=180,transform shape]
 \draw[blue,in=90,out=90,looseness=2] (0,0) to (1,0);
 \node at (0,0) {$\textcolor{blue}{\bullet}$};
 \draw[blue] (.5,.6) to (.5,1.2);
 \end{tikzpicture}} \]
 \end{defn}

 \begin{defn}
 A \textit{counital coalgebra} in $\mcal C$ is a triple $(X, \delta, \epsilon)$, where $X$ is an object of $\mcal C$, $\delta \in \text{Hom}_{\mcal C}(X, X \otimes X)$ is the comultiplication morphism, and $\epsilon \in \text{Hom}_{\mcal C}(X, \mathbbm{1})$ is the counit morphism, depicted as follows:
 \vspace*{-4mm}
 \[ \delta = \raisebox{-6mm}{
 \begin{tikzpicture}
 \draw[blue,in=90,out=90,looseness=2] (-0.5,0.5) to (-1.5,0.5);
 \draw[blue] (-1,1.1) to (-1,1.6);
 \node[left,scale=0.7] at (-1,1.6) {$X$};
 \node[left,scale=0.7] at (-1.6,0.5) {$X$};
 \node[right,scale=0.7] at (-.5,.5) {$X$};
 \end{tikzpicture}} \ \ \ \ \ \ 
 \epsilon = \raisebox{-8mm}{
 \begin{tikzpicture}
 \draw [blue] (-0.8,-.6) to (-.8,.6);
 \node at (-.8,-.6) {${\color{blue}\bullet}$};
 \node[left,scale=0.7] at (-.8,.6) {$X$};
 \node[scale=.8] at (-.8,-.9) {$\mathbbm{1}$};
 \end{tikzpicture}} 
 \vspace*{-2mm}\]
 These satisfy the following axioms :
 \begin{itemize}
 \item (Coassociativity) $(\delta \ot \text{id}_X) \circ \delta = (\text{id}_X \ot \delta) \circ \delta $,
 
 \item (Counitality) $(\epsilon \ot \text{id}_X) \circ \delta = \text{id}_X = (\text{id}_X \ot \epsilon) \circ \delta$,
 \end{itemize}
 depicted as follows:
 \vspace*{-2mm}
 \[\text{(coassociativity)} \ \ \ \ \raisebox{-6mm}{
 \begin{tikzpicture}
 \draw[blue,in=90,out=90,looseness=2] (0,0) to (1,0);
 \draw[blue,in=90,out=90,looseness=2] (0.5,.6) to (-.5,.6);
 \draw[blue] (-.5,.6) to (-.5,0);
 \draw[blue] (0,1.2) to (0,1.6);
 \end{tikzpicture}}
 = \raisebox{-6mm}{\begin{tikzpicture}
 \draw[blue,in=90,out=90,looseness=2] (0,0) to (1,0);
 \draw[blue,in=90,out=90,looseness=2] (.5,.6) to (1.5,.6);
 \draw[blue] (1.5,.6) to (1.5,0);
 \draw[blue] (1,1.2) to (1,1.6);
 \end{tikzpicture}} \]
 
 \[ \text{(counitality)} \ \ \ \ 
 \raisebox{-4mm}{
 \begin{tikzpicture}
 \draw[blue,in=90,out=90,looseness=2] (0,0) to (1,0);
 \node at (0,0) {$\textcolor{blue}{\bullet}$};
 \draw[blue] (.5,.6) to (.5,1.2);
 \end{tikzpicture}} = \raisebox{-4mm}{\begin{tikzpicture}
 \draw[blue] (0,0) to (0,1.2);
 \end{tikzpicture}} = \raisebox{-4mm}{
 \begin{tikzpicture}
 \draw[blue,in=90,out=90,looseness=2] (0,0) to (1,0);
 \node at (1,0) {$\textcolor{blue}{\bullet}$};
 \draw[blue] (.5,.6) to (.5,1.2);
 \end{tikzpicture}} \]
 \end{defn}
 
 \begin{defn}\label{frobalgdefn}
 A \emph{Frobenius algebra} in $\mcal C$ is a quintuple $(X, m, e, \delta, \epsilon)$, where $(X, m, e)$ is a unital algebra and $(X, \delta, \epsilon)$ is a counital coalgebra, satisfying the following axiom:
 \[\text{(Frobenius)} \ \ (\text{id}_X \ot m) \circ (\delta \ot \text{id}_X) = \delta \circ m = (m \ot \text{id}_X) \circ (\text{id}_X \ot \delta), \]
 depicted as follows:
 \vspace*{-5mm}
 \[ \text{(Frobenius condition)} \ \ \ \ \ \ \raisebox{-8mm}{
 \begin{tikzpicture}
 \draw[blue,in=90,out=90,looseness=2] (0,0) to (1,0);
 \draw[blue,in=-90,out=-90,looseness=2] (1,0) to (2,0);
 \draw[blue] (.5,.6) to (.5,1.2);
 \draw[blue] (1.5,-.6) to (1.5,-1.2);
 \draw[blue] (0,0) to (0,-1.2);
 \draw[blue] (2,0) to (2,1.2);
 \end{tikzpicture}} =
 \raisebox{-6mm}{
 \begin{tikzpicture}
 \draw[blue,in=90,out=90,looseness=2] (0,0) to (1,0);
 \draw[blue] (.5,.6) to (.5,1.2);
 \draw[blue,in=-90,out=-90,looseness=2] (0,1.8) to (1,1.8);
 \end{tikzpicture}} =
 \raisebox{-8mm}{
 \begin{tikzpicture}
 \draw[blue,in=-90,out=-90,looseness=2] (0,0) to (1,0);
 \draw[blue,in=90,out=90,looseness=2] (1,0) to (2,0);
 \draw[blue] (.5,-.6) to (.5,-1.2);
 \draw[blue] (1.5,.6) to (1.5,1.2);
 \draw[blue] (0,0) to (0,1.2);
 \draw[blue] (2,0) to (2,-1.2);
 \end{tikzpicture}} \]
 \end{defn}
 
 \begin{remark}\label{dualrem}
 Let $(X, m, e, \delta, \epsilon)$ be a Frobenius algebra. Then, as shown for example in \cite[Lemma~2.5]{GP25}, $X$ is selfdual (i.e., $X^* = X$), with evaluation morphism 
 \(
 \text{ev}_X \coloneqq \epsilon \circ m
 \) 
 and coevaluation morphism 
 \(
 \text{coev}_X \coloneqq \delta \circ e,
 \)
 as illustrated below:
 \vspace*{-2mm}
 \[ \raisebox{-4mm}{\begin{tikzpicture}
 \draw[blue,in=-90,out=-90,looseness=2] (-0.5,0.5) to (-1.6,0.5);
 \end{tikzpicture}}
 \coloneqq \,
 \raisebox{-6mm}{\begin{tikzpicture}
 \draw[blue,in=-90,out=-90,looseness=2] (-0.5,0.5) to (-1.5,0.5);
 \node at (-1,-.6) {${\color{blue}\bullet}$};
 \draw[blue] (-1,-.1) to (-1,-.6);
 \end{tikzpicture}} \hspace*{6mm} \text{and} \hspace*{6mm} \raisebox{-2mm}{\begin{tikzpicture}
 \draw[blue,in=90,out=90,looseness=2] (-0.5,0.5) to (-1.6,0.5);
 \end{tikzpicture}} \coloneqq \, \raisebox{-4mm}
 {\begin{tikzpicture}
 \draw[blue,in=90,out=90,looseness=2] (-0.5,0.5) to (-1.5,0.5);
 \node at (-1,1.6) {${\color{blue}\bullet}$};
 \draw[blue] (-1,1.1) to (-1,1.6);
 \end{tikzpicture}} \]
 \end{remark}
 
 \begin{defn}
 Let $(X, m, e)$ and $(X', m', e')$ be unital algebras in $\mcal C$. A morphism $f : X \to X'$ is called a \textit{unital algebra morphism} if it satisfies
 \[
 e' = f \circ e \quad \text{and} \quad m' \circ (f \otimes f) = f \circ m.
 \]
 This condition is illustrated as follows:
 
 \vspace*{-4mm}
 \[ \raisebox{-6mm}{\begin{tikzpicture}
 \draw[blue] (0,0) to (0,1);
 \node at (0,1) {$\textcolor{blue}{\bullet}$};
 \node[left,scale=0.7] at (0,-.2) {$X'$};
 \end{tikzpicture}}
 =
 \raisebox{-8mm}{\begin{tikzpicture}
 \draw[blue] (0,0) to (0,1.5);
 \node[draw,thick,rounded corners, fill=white,minimum width=15] at (0,.7){$f$};
 \node at (0,1.5) {$\textcolor{blue}{\bullet}$};
 \node[left,scale=0.7] at (0,0) {$X'$};
 \end{tikzpicture}}
 \hspace*{12mm}
 \raisebox{-8mm}{
 \begin{tikzpicture}
 \draw[blue,in=-90,out=-90,looseness=2] (-0.5,0.5) to (-1.5,0.5);
 \draw[blue] (-1,-.1) to (-1,-.6);
 \draw[blue] (-.5,.6) to (-.5,1.2);
 \draw[blue] (-1.5,.6) to (-1.5,1.2);
 \node[draw,thick,rounded corners, fill=white,minimum width=15] at (-.5,.6){$f$};
 \node[draw,thick,rounded corners, fill=white,minimum width=15] at (-1.5,.6){$f$};
 \node[left,scale=0.7] at (-1,-.6) {$X'$};
 \node[left,scale=0.7] at (-1.5,1.2) {$X$};
 \node[right,scale=0.7] at (-.5,1.2) {$X$};
 \end{tikzpicture}} 
 =
 \raisebox{-7mm}{
 \begin{tikzpicture}
 \draw[blue,in=-90,out=-90,looseness=2] (-0.5,0.5) to (-1.5,0.5);
 \draw[blue] (-1,-.1) to (-1,-1);
 \node[draw,thick,rounded corners, fill=white,minimum width=15] at (-1,-.5){$f$};
 \node[left,scale=0.7] at (-1,-1.2) {$X'$};
 \node[left,scale=0.7] at (-1.6,0.5) {$X$};
 \node[right,scale=0.7] at (-.5,.5) {$X$};
 \end{tikzpicture}}
 \]
 \end{defn}
 
Let $f : X \to Y$ be a morphism in a monoidal category $\mathcal{C}$, where $X$ and $Y$ admit left duals $X^*$ and $Y^*$. The left dual $f^* : Y^* \to X^*$ is defined pictorially as follows:
 \[f^* \coloneqq \raisebox{-8mm}{
 \begin{tikzpicture}
 \draw [blue] (0,0) to (0,1);
 \draw[blue,in=90,out=90,looseness=2] (0,1) to (.6,1);
 \draw[blue,in=-90,out=-90,looseness=2] (0,0) to (-.6,0);
 \draw[blue] (.6,1) to (.6,.1);
 \draw[blue] (-.6,0) to (-.6,.9);
 \node[draw,thick,rounded corners,fill = white] at (0,.4) {$f$};
 \node[scale = .8] at (-.6, 1) {$Y^*$};
 \node[scale = .8] at (.6,-.1) {$X^*$};
 \end{tikzpicture}}.\]
 
 \begin{defn}[\cite{GP25}]\label{Frobsubalgdefn} 
 A Frobenius algebra $X'$ is called a \textit{Frobenius subalgebra} of a Frobenius algebra $X$ in $\mcal C$ if there exists a unital algebra monomorphism $i : X' \to X$ such that $i^{**} = i$ and $i^* \circ i = \text{id}_{X'}$ (equivalently, $i$ is counital). We then define the selfdual unital idempotent
 \(
 b_{X'} \coloneqq i \circ i^* \in \End_{\mcal C}(X).
 \)
 \end{defn}

\begin{notation} \label{rk:dagger}
Let $\mathcal{C}$ be a unitary tensor category. We denote its unitary structure, also called the adjoint, by $\dagger: \mathcal{C} \to \mathcal{C}$. In particular, it is an involutive contravariant functor that fixes every object and satisfies $(a \circ b)^\dagger = b^\dagger \circ a^\dagger$ for all morphisms $a,b$ in $\mathcal{C}$.
\end{notation}

 \section{Biprojections and exchange relations}\label{biprojER}
 The notions of biprojection and exchange relations were first introduced in \cite{B94}, and later in \cite{BJ00} and \cite{L02}, to characterize intermediate subfactors of an irreducible subfactor. This result is generalized in \S\ref{main}. 
 As background, this section extends these notions to linear monoidal categories (without assuming unitarity), examines them for Frobenius algebras in $\VVec$, and provides a complete classification of biprojections on $M_2(\C)$.
 
 \subsection{Biprojections and exchange relations in monoidal categories}\
 \begin{defn} \label{def:bipro}
 Let $X$ be a Frobenius algebra in a linear monoidal category $\mcal C$. A selfdual unital idempotent $b : X \to X$ is called a \emph{biprojection} if it is \emph{convolution-stable}, i.e., there exists a nonzero scalar $\lambda$ such that
 \vspace*{-3mm}
 \[ b*b \coloneqq m \circ (b \otimes b) \circ \delta = \raisebox{-9mm}{
 \begin{tikzpicture}
 \draw[blue,in=90,out=90,looseness=2] (0,0) to (1,0);
 \draw[blue,in=-90,out=-90,looseness=2] (0,0) to (1,0);
 \draw[blue] (.5,-.6) to (.5,-1);
 \draw[blue] (.5,.6) to (.5,1);
 \node[draw,thick,rounded corners, fill=white,minimum width=20] at (0,0){$b$};
 \node[draw,thick,rounded corners, fill=white,minimum width=20] at (1,0){$b$}; 
 \end{tikzpicture}} = \lambda \, b \]
 In the unitary setting, a \emph{unitary biprojection} $b$ is a self-adjoint biprojection (i.e. $b^\dagger = b$).
 \end{defn}
 By \cite[Proposition~3.6]{BJ00} and \cite[Theorem~2 and Corollary~2.1]{L02}, in the connected case, the notion of a unitary biprojection coincides with the usual notion of a biprojection in the $2$-box space of an irreducible subfactor planar algebra.

 \begin{defn}\label{ERdefn} 
 Let $X$ be a Frobenius algebra in a monoidal category $\mcal C$. A morphism $b \in \End_{\mcal C}(X)$ is said to satisfy the \textit{exchange relations} if:
 \vspace*{-1mm}
 \[ \raisebox{-14mm}{
 \begin{tikzpicture}
 \draw[blue,in=-90,out=-90,looseness=2] (-0.5,0.5) to (-1.5,0.5);
 \draw[blue] (-1,-.1) to (-1,-1);
 \draw[blue] (-1.5,.5) to (-1.5,1.2);
 \draw[blue] (-.5,.5) to (-.5,1.2);
 \node[draw,thick,rounded corners, fill=white] at (-1.5,.6) {$b$};
 \node[draw,thick,rounded corners, fill=white] at (-1,-.5) {$b$};
 \end{tikzpicture}} 
 =
 \raisebox{-14mm}{
 \begin{tikzpicture}
 \draw[blue,in=-90,out=-90,looseness=2] (-0.5,0.5) to (-1.5,0.5);
 \draw[blue] (-1,-.1) to (-1,-1);
 \draw[blue] (-1.5,.6) to (-1.5,1.2);
 \draw[blue] (-.5,.6) to (-.5,1.2);
 \node[draw,thick,rounded corners, fill=white] at (-1.5,.6) {$b$};
 \node[draw,thick,rounded corners, fill=white] at (-.5,.6) {$b$};
 \end{tikzpicture}}
 =
 \raisebox{-14mm}{
 \begin{tikzpicture}
 \draw[blue,in=-90,out=-90,looseness=2] (-0.5,0.5) to (-1.5,0.5);
 \draw[blue] (-1,-.1) to (-1,-1);
 \draw[blue] (-1.5,.5) to (-1.5,1.2);
 \draw[blue] (-.5,.5) to (-.5,1.2);
 \node[draw,thick,rounded corners, fill=white] at (-1,-.5) {$b$};
 \node[draw,thick,rounded corners, fill=white] at (-.5,.6) {$b$};
 \end{tikzpicture}}\]
 \end{defn}
 
 \begin{remark}\label{ERdualrem}
 By~\cite[Lemma~2.10]{GP25}, if $b$ is selfdual, the first relation is equivalent to the second, and both are equivalent to:
 \vspace*{-3mm}
 \[ \raisebox{-6mm}{
 \begin{tikzpicture}
 \draw[blue,in=90,out=90,looseness=2] (-0.5,0.5) to (-1.5,0.5);
 \draw[blue] (-1,1.1) to (-1,2);
 \draw[blue] (-.5,.5) to (-.5,-.2);
 \draw[blue] (-1.5,.5) to (-1.5,-.2);
 \node[draw,thick,rounded corners, fill=white,minimum width=15] at (-1.5,.4){$b$};
 \node[draw,thick,rounded corners, fill=white,minimum width=15] at (-1,1.55){$b$};
 \end{tikzpicture}}
 =
 \raisebox{-6mm}{
 \begin{tikzpicture}
 \draw[blue,in=90,out=90,looseness=2] (-0.5,0.5) to (-1.5,0.5);
 \draw[blue] (-1,1.1) to (-1,2);
 \draw[blue] (-.5,.5) to (-.5,-.2);
 \draw[blue] (-1.5,.5) to (-1.5,-.2);
 \node[draw,thick,rounded corners, fill=white,minimum width=15] at (-1.5,.4){$b$};
 \node[draw,thick,rounded corners, fill=white,minimum width=15] at (-.5,.4){$b$};
 \end{tikzpicture}}
 =
 \raisebox{-6mm}{
 \begin{tikzpicture}
 \draw[blue,in=90,out=90,looseness=2] (-0.5,0.5) to (-1.5,0.5);
 \draw[blue] (-1,1.1) to (-1,2);
 \draw[blue] (-.5,.5) to (-.5,-.2);
 \draw[blue] (-1.5,.5) to (-1.5,-.2);
 \node[draw,thick,rounded corners, fill=white,minimum width=15] at (-.5,.4){$b$};
 \node[draw,thick,rounded corners, fill=white,minimum width=15] at (-1,1.55){$b$};
 \end{tikzpicture}}\]
 \end{remark}
 
 %
 %
 
 
 \begin{defn} \label{def:fourier} 
 Let $X$ be a Frobenius algebra in a monoidal category $\mcal C$. The \textit{Fourier transform} 
 \(
 \mcal F : \End_{\mcal C}(X) \to \End_{\mcal C}(X \otimes X)
 \) 
 is defined by
 \(
 \mcal F(a) = (\text{id}_X \otimes m) \circ (\text{id}_X \otimes a \otimes \text{id}_X) \circ (\delta \otimes \text{id}_X),
 \) 
 with left inverse
 \(
 \mcal F^{-1}(x) = (\epsilon \otimes \text{id}_X) \circ x \circ (\text{id}_X \otimes e).
 \) 
 These transformations are illustrated below:
 \vspace*{-3mm}
 \[ \mathcal F (a) = \raisebox{-8mm}{
 \begin{tikzpicture}
 \draw[blue,in=90,out=90,looseness=2] (0,0) to (1,0);
 \draw[blue,in=-90,out=-90,looseness=2] (1,0) to (2,0);
 \draw[blue] (.5,.6) to (.5,1);
 \draw[blue] (1.5,-.6) to (1.5,-1);
 \draw[blue] (0,0) to (0,-1);
 \draw[blue] (2,0) to (2,1);
 \node[draw,thick,rounded corners, fill=white,minimum width=20] at (1,0) {$a$};
 \end{tikzpicture}}
 \hspace*{8mm} \ \ \t{and} \ \ \ \ \ \
 \mathcal F^{-1} (x) = \raisebox{-6mm}{\begin{tikzpicture}
 \draw[blue] (0,0) to (0,1);
 \draw[blue] (.5,0) to (.5,1);
 \node[draw,thick,rounded corners, fill=white,minimum width=22] at (.25,.5) {$x$};
 \node at (0,0) {$\textcolor{blue}{\bullet}$};
 \node at (.5,1) {$\textcolor{blue}{\bullet}$};
 \end{tikzpicture}}
 \] 
 \end{defn}
 
 \begin{defn}[\cite{T98,P18}] \label{def:normalbipro}
 A morphism $a \in \End_{\mcal C}(X)$ is called \textit{normal} if both $a$ and $\mcal F(a)$ are central in their respective endomorphism spaces.
 \end{defn}
 
 The notion of a normal biprojection generalizes that of a normal subgroup. It was introduced in \cite{P18}, building on \cite{T98}, to define cyclic subfactor planar algebras. By Proposition~\ref{normalERprop}, they satisfy the exchange relations. Without the normality assumption, see Question~\ref{qu:BiER}.

 \begin{prop}\label{normalERprop}
 A selfdual morphism $b:X \to X$ whose Fourier transform is central satisfies the exchange relations.
 \end{prop}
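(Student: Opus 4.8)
The plan is to show that centrality of $\mcal F(b)$ forces $\mcal F(b)$ to be idempotent-like in a way that immediately yields the exchange relation once unpacked through the Fourier transform. First I would write out $\mcal F(b) \in \End_{\mcal C}(X\otimes X)$ explicitly using Definition~\ref{def:fourier}. The key observation is that the left-hand side of the exchange relation in the form appearing in Remark~\ref{ERdualrem} — namely $m \circ (b\otimes\id_X)$ precomposed appropriately, or equivalently the diagram with one $b$ below the multiplication and one $b$ on a through-strand — can be rewritten as a composite involving $\mcal F(b)$ and one of the structure morphisms $m$ or $\delta$. Concretely, each of the three diagrams in Definition~\ref{ERdefn} is obtained from $\mcal F(b)$ by capping off with evaluations/coevaluations (equivalently with $m,e$ or $\delta,\epsilon$) in three different ways, so that the three exchange relations become three different "partial traces" or "compositions" of the single two-box morphism $\mcal F(b)$.

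The main step is then the following: because $\mcal F(b)$ is central in $\End_{\mcal C}(X\otimes X)$, it commutes in particular with the morphism $\id_X \otimes m \otimes \cdots$ type elements, or more usefully with the "rotation"/"multiplication on one leg" endomorphisms of $X\otimes X$ built from the Frobenius structure. Sliding $\mcal F(b)$ past such a morphism and then collapsing using associativity, the Frobenius relation (Definition~\ref{frobalgdefn}), and (co)unitality converts one of the three exchange diagrams into another. I would carry this out twice: once to get the equality of the first and second diagrams of Definition~\ref{ERdefn}, and once more (or by the selfduality symmetry noted in Remark~\ref{ERdualrem}, invoking \cite[Lemma~2.9]{GP25}) to get the equality with the third. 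Since $b$ is assumed selfdual, Remark~\ref{ERdualrem} tells us it suffices to verify a single one of these equalities, which streamlines the argument to one application of centrality.

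The main obstacle I anticipate is purely diagrammatic bookkeeping: identifying the precise endomorphism $\phi \in \End_{\mcal C}(X\otimes X)$ such that $\phi \circ \mcal F(b)$ (resp. $\mcal F(b) \circ \phi$) equals, after applying the Frobenius and (co)unit axioms, the two sides of the exchange relation. Getting the cups and caps on the correct legs — and making sure the "central" hypothesis is applied to an endomorphism of $X\otimes X$ and not accidentally of $X$ — is where care is needed. A secondary point is that "central" must be interpreted as commuting with \emph{all} endomorphisms of $X\otimes X$, so I should make sure the auxiliary morphism $\phi$ I use genuinely lives in $\End_{\mcal C}(X\otimes X)$; it will, since it is built from $\id_X$, $m$, $\delta$, $e$, $\epsilon$ composed to land from $X\otimes X$ to $X\otimes X$. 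Once the right $\phi$ is pinned down, the rest is a short computation using only the axioms already recalled in \S\ref{prelim}, together with selfduality of $b$ via Remark~\ref{ERdualrem}.
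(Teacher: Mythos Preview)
Your overall strategy is the paper's strategy: exploit centrality of $\mcal F(b)$ in $\End_{\mcal C}(X\otimes X)$ by commuting it with a well-chosen $\phi$, cap off, and use selfduality via Remark~\ref{ERdualrem} to reduce to a single identity. That part is fine.

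The gap is in your choice of $\phi$. You assert that each of the three diagrams in Definition~\ref{ERdefn} is ``obtained from $\mcal F(b)$ by capping off with evaluations/coevaluations,'' and later that the relevant $\phi$ ``is built from $\id_X$, $m$, $\delta$, $e$, $\epsilon$.'' This cannot work: every diagram in the exchange relation contains \emph{two} copies of $b$, whereas $\mcal F(b)$ contains only one, and a $\phi$ built from the Frobenius structure alone contains none. Commuting $\mcal F(b)$ past such a $\phi$ and capping can never produce a second $b$, so neither side of the resulting identity can be an exchange diagram.

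The fix, and this is exactly what the paper does, is to let the second $b$ come from $\phi$ itself: take $\phi = \id_X \otimes b$. Centrality gives
\[
(\id_X \otimes b)\circ \mcal F(b) \;=\; \mcal F(b)\circ(\id_X \otimes b).
\]
Now compose both sides on the left with $\epsilon \otimes \id_X$. Since $(\epsilon \otimes \id_X)\circ \mcal F(b) = m\circ(b\otimes \id_X)$ by counitality, the two sides become $b\circ m\circ(b\otimes\id_X)$ and $m\circ(b\otimes b)$, which is the first equality of Definition~\ref{ERdefn}. Selfduality via Remark~\ref{ERdualrem} then gives the second. So your anticipated ``main obstacle'' has a much simpler resolution than the rotation or multiplication-on-one-leg morphisms you were reaching for: the auxiliary endomorphism is just $\id_X\otimes b$.
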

 
 \begin{proof}
 Since $\mathcal{F}(b)$ is central in $\End_{\mathcal{C}}(X \otimes X)$, it commutes with $\id_X \otimes b$. Therefore,
 \vspace*{-1mm}
 \[\raisebox{-8mm}{
 \begin{tikzpicture}
 \draw[blue,in=90,out=90,looseness=2] (0,0) to (1,0);
 \draw[blue,in=-90,out=-90,looseness=2] (1,0) to (2,0);
 \draw[blue] (.5,.6) to (.5,1);
 \draw[blue] (1.5,-.6) to (1.5,-1.2);
 \draw[blue] (0,0) to (0,-1.2);
 \draw[blue] (2,0) to (2,1);
 \node[draw,thick,rounded corners, fill=white,scale=.8] at (1,0) {$b$};
 \node[draw,thick,rounded corners, fill=white,scale=.8] at (2,0) {$b$};
 \end{tikzpicture}}
 =
 \raisebox{-8mm}{
 \begin{tikzpicture}
 \draw[blue,in=90,out=90,looseness=2] (0,0) to (1,0);
 \draw[blue,in=-90,out=-90,looseness=2] (1,0) to (2,0);
 \draw[blue] (.5,.6) to (.5,1);
 \draw[blue] (1.5,-.6) to (1.5,-1.2);
 \draw[blue] (0,0) to (0,-1.2);
 \draw[blue] (2,0) to (2,1);
 \node[draw,thick,rounded corners, fill=white,scale=.8] at (1,0) {$b$};
 \node[draw,thick,rounded corners, fill=white,scale=.8] at (1.5,-.9) {$b$};
 \end{tikzpicture}}\]
 Composing with $\epsilon \otimes \id_X$ and using the counitality of $X$ yields
 \vspace*{-1mm}
 \[
 \raisebox{-14mm}{
 \begin{tikzpicture}
 \draw[blue,in=-90,out=-90,looseness=2] (-0.5,0.5) to (-1.5,0.5);
 \draw[blue] (-1,-.1) to (-1,-.9);
 \draw[blue] (-1.5,.6) to (-1.5,1);
 \draw[blue] (-.5,.6) to (-.5,1);
 \node[draw,thick,rounded corners, fill=white,scale=.8] at (-1.5,.6) {$b$};
 \node[draw,thick,rounded corners, fill=white,scale=.8] at (-.5,.6) {$b$};
 \end{tikzpicture}}
 =
 \raisebox{-14mm}{
 \begin{tikzpicture}
 \draw[blue,in=-90,out=-90,looseness=2] (-0.5,0.5) to (-1.5,0.5);
 \draw[blue] (-1,-.1) to (-1,-.9);
 \draw[blue] (-1.5,.5) to (-1.5,1);
 \draw[blue] (-.5,.5) to (-.5,1);
 \node[draw,thick,rounded corners, fill=white,scale=.8] at (-1.5,.6) {$b$};
 \node[draw,thick,rounded corners, fill=white,scale=.8] at (-1,-.5) {$b$};
 \end{tikzpicture}} 
 \] 
 The second relation follows from Remark~\ref{ERdualrem}, since $b$ is selfdual.
 \end{proof}
 
 We will see in \S\ref{sub:ERVec} that a unital selfdual idempotent satisfying the exchange relations is not necessarily a biprojection.
 
 
 \subsection{Biprojections and exchange relations in Vec}\label{sub:ERVec}\
 
Let $A$ be a finite-dimensional $\mathbbm{k}$-algebra. A bilinear form $\kappa : A \times A \to \mathbbm{k}$ is called \emph{associative} if $\kappa(ab, c) = \kappa(a, bc)$ for all $a,b,c \in A$, and \emph{non-degenerate} if the map $A \to A^*$, $a \mapsto \kappa(a,-)$, is an isomorphism. A \emph{Frobenius algebra} structure on $A$ in $\mathbf{Vec}_{\mathbbm{k}}$ is given by a bilinear form $\kappa$ that is both associative and non-degenerate. This form endows $A$ with the structure of a counital coalgebra (e.g., see \cite{FS08}), with counit 
 \(
 \epsilon(a) = \kappa(a,1)
 \)
 and comultiplication
 \[
 \Delta(a) = \sum_i (a e_i) \otimes e_i',
 \]
 where $(e_i)$ and $(e_i')$ are dual bases of $A$ (i.e. $\kappa(e_i,e_j') = \delta_{i,j}$). 
 Let $e$ and $m$ denote the unit and multiplication morphisms from $A$ as an object of $\VVec_{\mathbbm{k}}$. The (co)evaluation morphisms are then
 \(
 \ev_A = \epsilon \circ m \) 
 and 
 \(\coev_A = \Delta \circ e.
 \)
 Hence, for every morphism $f \in \End_{\mathbbm{k}}(A)$, its dual is given by
 \[
 f^*(a) = \sum_i \epsilon\big(a f(e_i)\big) e_i'.
 \]
 Finally, the convolution product $(f * g) := m \circ (f \otimes g) \circ \Delta$ satisfies
 \[
 (f * g)(a) = \sum_i f(a e_i)\, g(e_i').
 \]
 
 \begin{lemma} \label{lem0:12345}
 Let $A$ be a Frobenius algebra in \emph{$\VVec_{\mathbbm{k}}$} with dual bases $(e_i)$ and $(e_i')$. Consider a morphism $f \in \End_{\mathbbm{k}}(A)$. Let $a_i:=f(e_i)$. Then the morphism $f$ is
 \begin{enumerate}[(1)]
 \item \label{lem0_1} \emph{Selfdual ($f^*=f$)} if and only if for all $a,b$, $\epsilon(af(b)) = \epsilon(f(a)b)$, if and only if for all $i,j$, $\epsilon(e_ia_j) = \epsilon(a_ie_j)$.
 \item \label{lem0_2} \emph{Unital ($f \circ e=e$)} if and only if $f(1)=1$.
 \item \label{lem0_3} \emph{Idempotent ($f^2=f$)} if and only if for all $i$, $f(a_i)=a_i$, 
 \item \label{lem0_4} \emph{Convolution-stable ($f * f= \lambda f$)} if and only if for all $i$, $\sum_j f(e_ie_j)f(e_j') = \lambda f(e_i)$,
 \item \label{lem0_5} \emph{ER (exchange relations)} if and only if $f(f(a)b) = f(a)f(b) = f(af(b))$ for all $a,b$, if and only if for all $i,j$, $f(a_ie_j) = a_i a_j = f(e_ia_j).$
 \end{enumerate}
 \end{lemma}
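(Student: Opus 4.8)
The strategy is to unwind each of the five categorical conditions on $f$ into a concrete pointwise statement about $A$ using the explicit formulas for $e$, $m$, $\epsilon$, $\Delta$, the dual $f^{*}$, and the convolution product $f*g$ recorded just above the statement, and then to move between the ``for all $a,b$'' form and the ``for all $i,j$'' form by (bi)linearity. The one computational tool I would isolate at the outset is the resolution of identity
\[
a \;=\; \sum_i \epsilon(a e_i')\, e_i \qquad \text{for all } a\in A,
\]
which follows from $\kappa(e_i,e_j')=\delta_{ij}$ together with $\epsilon(xy)=\kappa(xy,1)=\kappa(x,y)$; I would also record that nondegeneracy of $\kappa$ says precisely that $\epsilon(xy)=0$ for all $x$ forces $y=0$.

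Items \ref{lem0_2} and \ref{lem0_3} are essentially immediate: $f\circ e$ is the morphism $1\mapsto f(1_A)$, so $f\circ e=e$ iff $f(1_A)=1_A$; and since $(e_i)$ is a basis, $(a_i)=(f(e_i))$ spans $\mathrm{im}\,f$, so $f^{2}=f$ iff $f$ restricts to the identity on a spanning set of its image, i.e.\ iff $f(a_i)=a_i$ for all $i$. For \ref{lem0_4}, the formula $\Delta(a)=\sum_i (ae_i)\otimes e_i'$ gives $(f*f)(a)=\sum_i f(ae_i)f(e_i')$ at once; both sides of $(f*f)(a)=\lambda f(a)$ are linear in $a$, so the equation holds for all $a$ iff it holds on the basis, which is exactly $\sum_j f(e_ie_j)f(e_j')=\lambda f(e_i)$. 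For \ref{lem0_5}, reading the three diagrams of Definition~\ref{ERdefn} from top to bottom translates the exchange relations into $f(f(a)b)=f(a)f(b)=f(af(b))$; all three expressions are bilinear in $(a,b)$, so this triple equality holds for all $a,b$ iff it holds for $a=e_i$, $b=e_j$, which—after substituting $a_i=f(e_i)$—is the stated basis condition.

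Item \ref{lem0_1} is the only one carrying real content, and is where I expect the only (mild) obstacle. Here I would first establish the adjunction identity $\epsilon\bigl(b\,f^{*}(a)\bigr)=\epsilon\bigl(a\,f(b)\bigr)$ for all $a,b$: substituting $f^{*}(a)=\sum_i \epsilon(af(e_i))\,e_i'$ gives $\epsilon(b\,f^{*}(a))=\sum_i \epsilon(af(e_i))\,\epsilon(b e_i')$, and by the resolution of identity and linearity of $f$ one has $\sum_i \epsilon(be_i')\,f(e_i)=f\bigl(\sum_i \epsilon(be_i')\,e_i\bigr)=f(b)$, whence the claim. Nondegeneracy of $\epsilon(x\,\cdot\,)$ then yields $f=f^{*}$ iff $\epsilon(af(b))=\epsilon(bf(a))$ for all $a,b$, which, up to cyclicity of $\epsilon$, is the displayed condition $\epsilon(af(b))=\epsilon(f(a)b)$; bilinearity in $(a,b)$ finally gives the equivalence with $\epsilon(e_i a_j)=\epsilon(a_i e_j)$ for all $i,j$. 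The only real care needed throughout is bookkeeping with the two dual bases $(e_i)$, $(e_i')$ and keeping straight in which direction the resolution of identity is applied; no genuine obstacle arises.
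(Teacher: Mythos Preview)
Your argument is correct and, since the paper's own proof is simply the word ``Straightforward,'' there is nothing to compare at the level of strategy: your write-up is exactly the kind of unpacking the authors had in mind.

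One small wrinkle in item~(\ref{lem0_1}) deserves tightening. Your adjunction identity $\epsilon(b\,f^{*}(a))=\epsilon(a\,f(b))$ is correct, and nondegeneracy then gives $f=f^{*}\iff \epsilon(b\,f(a))=\epsilon(a\,f(b))$. But you pass from this to the lemma's stated condition $\epsilon(a\,f(b))=\epsilon(f(a)\,b)$ by invoking ``cyclicity of $\epsilon$,'' which is \emph{not} a hypothesis: a general Frobenius algebra in $\VVec_{\mathbbm{k}}$ need not be symmetric, so $\epsilon(xy)=\epsilon(yx)$ can fail. The clean fix is to avoid the detour entirely. From counitality (or the zig-zag) one has the \emph{other} resolution of identity
\[
x \;=\; \sum_i \epsilon(x\,e_i)\,e_i',
\]
so that $f(a)=\sum_i \epsilon(f(a)\,e_i)\,e_i'$. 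Comparing this coefficient-by-coefficient with $f^{*}(a)=\sum_i \epsilon(a\,f(e_i))\,e_i'$ gives $f=f^{*}\iff \epsilon(f(a)\,e_i)=\epsilon(a\,f(e_i))$ for all $a,i$, which by bilinearity is exactly the lemma's condition $\epsilon(f(a)\,b)=\epsilon(a\,f(b))$. No appeal to cyclicity is needed. (Your version and the lemma's version are in fact both equivalent to $f=f^{*}$, hence to each other; but the direct equivalence between them is not via cyclicity.)
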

 \begin{proof}
 Straightforward. 
 \end{proof}
 
 \begin{example}
 The unital algebra $M_n(\mathbbm{k})$ is a Frobenius algebra with the bilinear form $\kappa(X,Y) = \Tr(XY)$. Hence, the counit is $\epsilon = \Tr$, the comultiplication is
 $$\Delta(X) = \sum_{i,j}(XE_{i,j})\otimes E_{j,i},$$
 where $E_{i,j}$ denote the elementary matrices in $M_n(\mathbbm{k})$, and for every $S,T \in \End_{\mathbbm{k}}(M_n(\mathbbm{k}))$, 
 $$T^*(X) = \sum_{i,j} \Tr(XT(E_{i,j}))E_{j,i},$$
 $$(S * T)(X) = \sum_{i,j} S(XE_{i,j})T(E_{j,i}).$$
 \end{example}
 \begin{lemma} \label{lem:12345}
 Consider a morphism $T \in \End_{\mathbbm{k}}(M_n(\mathbbm{k}))$. Let us denote the entries of $A_{i,j}:=T(E_{i,j})$ as $A_{i,j,k,l}$. Then the morphism $T$ is
 \begin{enumerate}[(1)]
 \item \label{lem_1} \emph{Selfdual ($T^*=T$)} if and only if for all $X,Y$, $\Tr(XT(Y)) = \Tr(T(X)Y)$, if and only if for all $i,j,k,l$, $\Tr(E_{i,j}A_{k,l}) = \Tr(A_{i,j}E_{k.l})$, if and only if $A_{i,j,k,l} = A_{l,k,j,i}.$
 \item \label{lem_2} \emph{Unital ($T(\id)=\id$)} if and only if for all $k,l$, $\sum_i A_{i,i,k,l} = \delta_{k,l},$
 \item \label{lem_3} \emph{Idempotent ($T^2=T$)} if and only if for all $i,j,k,l$, $\sum_{r,s}A_{i,j,r,s} A_{r,s,k,l} = A_{i,j,k,l}.$
 \item \label{lem_4} \emph{Convolution-stable ($T * T = \lambda T$)} if and only if for all $i,j$
 $\sum_{r} A_{i,r} A_{r,j} = \lambda A_{i,j},$
 if and only if for all $i,j,k,l$,
 $\sum_{r,s} A_{i,r,k,s} A_{r,j,s,l} = \lambda A_{i,j,k,l},$
 \item \label{lem_5} \emph{ER (exchange relations $T(T(X)Y) = T(X)T(Y) = T(XT(Y))$ for all $X,Y$)} if and only if for all $i,j,k,l$ $T(A_{i,j}E_{k,l}) = A_{i,j}A_{k,l} = T(E_{i,j}A_{k,l}),$ if and only if
 for all $i,j,k,l,r,s$, $\sum_t A_{i,j,t,k}A_{t,l,r,s} = \sum_t A_{i,j,r,t}A_{k,l,t,s} = \sum_t A_{k,l,j,t} A_{i,t,r,s}.$
 \end{enumerate}
 \end{lemma}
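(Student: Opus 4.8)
The plan is to obtain \Cref{lem:12345} as the concrete counterpart of \Cref{lem0:12345} for the specific Frobenius algebra $A = M_n(\mathbbm{k})$ with form $\kappa(X,Y) = \Tr(XY)$, whose structure morphisms are recorded in the Example above. First I would fix two elementary identities that carry the whole computation: $E_{i,j}E_{k,l} = \delta_{j,k}E_{i,l}$, and $\Tr(E_{i,j}M) = M_{j,i} = \Tr(ME_{i,j})$ for every $M \in M_n(\mathbbm{k})$. In particular $\kappa(E_{i,j},E_{k,l}) = \delta_{i,l}\delta_{j,k}$, so $(E_{j,i})$ is the basis dual to $(E_{i,j})$, $\epsilon = \Tr$, $\Delta(X) = \sum_{i,j}(XE_{i,j})\otimes E_{j,i}$, $T^{*}(X) = \sum_{i,j}\Tr(XT(E_{i,j}))E_{j,i}$, and $(S*T)(X) = \sum_{i,j}S(XE_{i,j})T(E_{j,i})$, exactly as stated before the lemma. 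I would also fix once and for all the convention $A_{i,j,k,l} = (T(E_{i,j}))_{k,l} = (A_{i,j})_{k,l}$, and observe that each clause of \Cref{lem0:12345} is multilinear in its matrix arguments, so it is enough to test the defining identity on the basis $(E_{i,j})$ and then compare entries.

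Then every item is a one-line unwinding. For selfduality: \Cref{lem0:12345} gives $T^{*} = T$ iff $\Tr(XT(Y)) = \Tr(T(X)Y)$ for all $X,Y$, iff $\Tr(E_{i,j}A_{k,l}) = \Tr(A_{i,j}E_{k,l})$ for all $i,j,k,l$; evaluating both traces by $\Tr(E_{i,j}M) = M_{j,i}$ gives $A_{k,l,j,i} = A_{i,j,l,k}$, i.e.\ after relabeling $A_{i,j,k,l} = A_{l,k,j,i}$. For unitality: $T(\id) = \sum_i T(E_{i,i}) = \sum_i A_{i,i}$, and comparing $(k,l)$-entries with $\id$ gives $\sum_i A_{i,i,k,l} = \delta_{k,l}$. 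For idempotency: \Cref{lem0:12345} says $T^{2} = T$ iff $T(A_{i,j}) = A_{i,j}$ for all $i,j$; expanding $T(A_{i,j}) = \sum_{r,s}A_{i,j,r,s}A_{r,s}$ and reading off the $(k,l)$-entry yields $\sum_{r,s}A_{i,j,r,s}A_{r,s,k,l} = A_{i,j,k,l}$. For convolution-stability: the formula for $S*T$ together with $E_{i,j}E_{k,l} = \delta_{j,k}E_{i,l}$ collapses $(T*T)(E_{i,j})$ to $\sum_r A_{i,r}A_{r,j}$, so $T*T = \lambda T$ iff $\sum_r A_{i,r}A_{r,j} = \lambda A_{i,j}$, and the $(k,l)$-entry of the left-hand side is $\sum_{r,s}A_{i,r,k,s}A_{r,j,s,l}$.

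For the exchange relations, \Cref{lem0:12345} reduces them to $T(A_{i,j}E_{k,l}) = A_{i,j}A_{k,l} = T(E_{i,j}A_{k,l})$ for all $i,j,k,l$. Using $A_{i,j}E_{k,l} = \sum_t A_{i,j,t,k}E_{t,l}$ and $E_{i,j}A_{k,l} = \sum_t A_{k,l,j,t}E_{i,t}$, applying $T$ turns the two outer terms into $\sum_t A_{i,j,t,k}A_{t,l}$ and $\sum_t A_{k,l,j,t}A_{i,t}$, while the $(r,s)$-entry of the middle term $A_{i,j}A_{k,l}$ equals $\sum_t A_{i,j,r,t}A_{k,l,t,s}$; reading off $(r,s)$-entries throughout gives the displayed triple identity. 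No step presents a genuine obstacle — like its companion this is essentially bookkeeping — and the only thing requiring care is keeping the four indices of $A_{i,j,k,l}$, and the fixed row/column convention for each matrix product, consistent throughout.
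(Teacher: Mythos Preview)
Your proposal is correct and is precisely the intended ``straightforward'' computation that the paper omits: you specialize \Cref{lem0:12345} to $A = M_n(\mathbbm{k})$ with $\epsilon = \Tr$ and dual bases $(E_{i,j})$, $(E_{j,i})$, then read off entries using $E_{i,j}E_{k,l} = \delta_{j,k}E_{i,l}$ and $\Tr(E_{i,j}M) = M_{j,i}$. All five items are handled cleanly, and the index bookkeeping---in particular the relabeling in item~(1) and the entrywise expansion of the three ER terms in item~(5)---is correct.
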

 \begin{proof}
 Straightforward. 
 \end{proof}
 
 \noindent Recall that a \emph{biprojection} is a selfdual unital convolution-stable idempotent (Definition \ref{def:bipro}).
 
 \begin{proposition} \label{prop:bipro}
 For all $n \le 3$, every biprojection $T \in \End_{\mathbb{C}}(M_n(\mathbb{C}))$ is ER.
 \end{proposition}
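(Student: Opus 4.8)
The plan is to reduce the statement to the assertion that the range of every biprojection on $M_n(\C)$ with $n\le 3$ is a unital subalgebra, which is in fact equivalent to it. Indeed, by Lemma~\ref{lem:12345}\,(1)--(3), a selfdual unital idempotent $T\in\End_{\C}(M_n(\C))$ is the projection onto $R:=T(M_n(\C))$ along $R^{\perp}$, the orthogonal complement for the nondegenerate trace form $\kappa(X,Y)=\Tr(XY)$: selfduality gives $\ker T\subseteq R^{\perp}$, and the dimensions match. If $R$ is a subalgebra, this projection is precisely the trace-preserving conditional expectation $E_R$, which is an $R$-bimodule map, so $E_R(E_R(X)Y)=E_R(X)E_R(Y)=E_R(XE_R(Y))$, i.e.\ $T$ is ER by Lemma~\ref{lem:12345}\,(5). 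Conversely, if $T$ is ER then $T(ab)=T(T(a)b)=T(a)T(b)=ab$ for $a,b\in R$, so $R$ is a unital subalgebra, and it is moreover semisimple since its radical lies in $R\cap R^{\perp}=0$; thus $R$ is conjugate to one of the finitely many semisimple unital subalgebras of $M_n(\C)$.

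For $n=1$, unitality forces $T=\id$, which is ER. For $n=2$ I would argue according to $d:=\dim_{\C}R\in\{1,2,3,4\}$. The cases $d=1$ ($R=\C I$) and $d=4$ ($T=\id$) are immediate. If $d=2$, write $R=\C I\oplus\C M$ with $\Tr M=0$; nondegeneracy of $\kappa$ on $R$ forces $\Tr(M^2)\neq 0$, hence $M^2\in\C^{\times}I$ and $R$ is a commutative subalgebra, so the first paragraph applies. The only case to exclude is $d=3$: then $R^{\perp}=\C P$ with $P$ traceless and $\Tr(P^2)\neq 0$, so every such $R$ is conjugate to $\{X : X_{11}=X_{22}\}$, which is not closed under multiplication; a direct evaluation of $T*T$ for this single normal form, using Lemma~\ref{lem:12345}\,(4), shows it is not a nonzero multiple of $T$, so no biprojection has a $3$-dimensional range. (Alternatively, one may simply quote the explicit classification of biprojections on $M_2(\C)$ in Proposition~\ref{prop:bipro3}.)

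The case $n=3$ is the main obstacle and is handled by the same strategy, at the cost of a substantially longer analysis. Organizing by $d:=\dim_{\C}R\in\{1,\dots,9\}$ and using the conjugation action of $GL_3(\C)$ on $M_3(\C)$ --- which preserves $\kappa$ and all the conditions of Lemma~\ref{lem:12345} --- to bring $R$ to convenient normal forms, the properties of selfduality, unitality, idempotency and convolution-stability (Lemma~\ref{lem:12345}\,(1)--(4)) become a finite system of polynomial equations in the entries $A_{i,j,k,l}$. One shows that for $d\in\{4,6,7,8\}$ this system has no solution, while for $d\in\{1,2,3,5,9\}$ its only solutions are the conditional expectations $E_R$ onto (conjugates of) $\C$, $\C\oplus\C$, $\C^3$, $\C\oplus M_2(\C)$ and $M_3(\C)$; in each case $R$ is a subalgebra and the first paragraph yields the exchange relations. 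The crux --- and the reason the statement is claimed only for $n\le 3$ --- is that convolution-stability is precisely what rigidifies the otherwise continuous families of selfdual unital idempotents down to this finite list; for $n\ge 4$ this rigidity is not understood (compare Question~\ref{qu:BiER}).
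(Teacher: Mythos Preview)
Your reduction in the first paragraph --- that a selfdual unital idempotent $T$ is ER if and only if its range $R$ is a (necessarily semisimple) unital subalgebra --- is correct and is a genuinely different, more conceptual route than the paper's proof, which simply verifies by a Gr\"obner-basis computation in \S\ref{sub:sage} that the polynomials in Lemma~\ref{lem:12345}\,(\ref{lem_5}) lie in the ideal generated by those in (\ref{lem_1})--(\ref{lem_4}). Your treatment of $n\le 2$ is complete and correct, including the exclusion of $d=3$ via the single normal form $R=\{X:X_{11}=X_{22}\}$.

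The $n=3$ paragraph, however, contains a concrete error, not merely a gap. You assert that for $d\in\{2,5\}$ the biprojections are the conditional expectations onto conjugates of $\C\oplus\C$ and $\C\oplus M_2(\C)$, but these maps are \emph{not} convolution-stable. For $R=\C\oplus M_2(\C)$ block-diagonally embedded in $M_3(\C)$ one finds
\[
(E_R*E_R)(E_{11})=\sum_j E_R(E_{1j})E_R(E_{j1})=E_{11},\qquad
(E_R*E_R)(E_{22})=\sum_j E_R(E_{2j})E_R(E_{j2})=2E_{22},
\]
so $E_R*E_R$ is not a scalar multiple of $E_R$; the same unequal-block obstruction rules out $\C\oplus\C\subseteq M_3(\C)$. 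In fact, by Proposition~\ref{prop:bipro2} only $d\in\{1,3,9\}$ occur, with $\lambda\in\{1/3,1,3\}$. So the asserted outcome of your case analysis is wrong, which shows the analysis was not actually carried out; and even with the corrected list, ruling out $d\in\{2,4,5,6,7,8\}$ for \emph{all} selfdual unital idempotents with those ranges (not just the subalgebra ones) still requires a computation comparable in scope to the paper's. Your approach trades the paper's opaque ideal-membership check for a structurally informative reduction, but for $n=3$ it does not escape the computational core.
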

 \begin{proof}
 We are reduced to show that, in Lemma \ref{lem:12345}, the identities (\ref{lem_1}), (\ref{lem_2}), (\ref{lem_3}) and (\ref{lem_4}) implies (\ref{lem_5}). It is checked in \S \ref{sub:sage}.
 \end{proof}

The converse of Proposition~\ref{prop:bipro} fails: a selfdual unital ER idempotent need not be a biprojection (see \S\ref{sub:sage}). For $n=2$ (resp.~$n=3$), these idempotents form an algebraic set of dimension $3$ (resp. dimension $7$), versus dimension $2$ (resp. dimension $6$) for biprojections.
 
 \begin{question} \label{qu:BiER}
 Can Proposition~\ref{prop:bipro} be extended to all $n$? If so, does every biprojection in any $\mathbbm{k}$-linear Karoubian monoidal category satisfy the exchange relations?
 \end{question}
 
 \begin{example} \label{ex:bipromat}
 For any \( n \), the following are examples of biprojections \( T \in \End_{\mathbb{C}}(M_n(\mathbb{C})) \): 
 for \( \lambda = n \), the identity map \( T(X) = X \); 
 for \( \lambda = 1 \), the diagonal map \( T(X) = \sum_i X_{i,i} E_{i,i} \); 
 and for \( \lambda = 1/n \), the normalized trace map \( T(X) = \tfrac{1}{n}\Tr(X)\,\mathrm{id}. \)
 \end{example}
 
 \begin{proposition}[\S \ref{sub:sage}] \label{prop:bipro2}
 For all $n \le 3$, every biprojection $T \in \End_{\mathbb{C}}(M_n(\mathbb{C}))$ is convolution-stable with $\lambda \in \{1/n, 1, n\}$. There is a unique biprojection for $\lambda = 1/n$ and for $\lambda = n$ (see Example~\ref{ex:bipromat}). For $\lambda = 1$, the algebraic set of biprojections is continuously infinite, being $2$-dimensional for $n = 2$ (see Proposition \ref{prop:bipro3}) and $6$-dimensional for $n = 3$.
 \end{proposition}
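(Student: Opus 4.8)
The plan is to make the four defining conditions of a biprojection explicit in the coordinates $A_{i,j,k,l}$ of $T$ via Lemma~\ref{lem:12345}\,(\ref{lem_1})--(\ref{lem_4}), then extract by hand the one identity that pins down $\lambda$, and finally dispatch the remaining finitely many cases by a Gr\"obner basis computation (recorded in \S\ref{sub:sage}). The first step, valid for \emph{every} $n$, is to show that a biprojection $T$ on $M_n(\mathbb C)$ satisfies $n\lambda=\operatorname{rank}(T)$, where $\operatorname{rank}(T)$ denotes the rank of $T$ as an endomorphism of the $n^2$-dimensional space $M_n(\mathbb C)$. Apply $T*T=\lambda T$ to $\id=\sum_a E_{a,a}$: since $(T*T)(E_{a,b})=\sum_j T(E_{a,j})T(E_{j,b})$ and $T(\id)=\id$ by unitality, one gets $\sum_{a,j}T(E_{a,j})T(E_{j,a})=\lambda\,\id$ in $M_n(\mathbb C)$. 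Taking the matrix trace, the right side is $n\lambda$, while the left side is $\sum_{a,j,k,l}A_{a,j,k,l}A_{j,a,l,k}$; selfduality (in coordinates $A_{p,q,r,s}=A_{s,r,q,p}$) rewrites this as $\sum_{a,j,k,l}A_{a,j,k,l}A_{k,l,a,j}=\operatorname{Tr}(T^2)$, the trace of $T$ as an operator, and idempotency gives $\operatorname{Tr}(T^2)=\operatorname{Tr}(T)=\operatorname{rank}(T)$. Hence $\lambda=\operatorname{rank}(T)/n$; in particular $\lambda\neq 0$ and $n\lambda\in\{1,\dots,n^2\}$.

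Next I would settle the two extreme values by elementary linear algebra, with no computation. If $\lambda=n$ then $\operatorname{rank}(T)=n^2=\dim M_n(\mathbb C)$, so the idempotent $T$ is the identity. If $\lambda=1/n$ then $\operatorname{rank}(T)=1$, so $T(X)=\phi(X)\,Y_0$ for a linear functional $\phi$ and a matrix $Y_0$ with $\phi(Y_0)=1$; unitality forces $Y_0$ proportional to $\id$, and then selfduality forces $\phi$ proportional to $\Tr$, so $T(X)=\tfrac1n\Tr(X)\,\id$. Thus for $\lambda\in\{1/n,n\}$ the biprojection is unique and coincides with the one exhibited in Example~\ref{ex:bipromat}. (For $n=1$ the three candidate values coincide and the statement is already complete.)

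It then remains, for $n\in\{2,3\}$, to rule out every intermediate value of $\operatorname{rank}(T)$ except $\operatorname{rank}(T)=n$ (i.e. $\lambda=1$), and to determine the dimension of the $\lambda=1$ locus. After substituting $\lambda=\tfrac1n\sum_{i,j}A_{i,j,i,j}$ from Step~1, the set of biprojections is an affine variety in the $A_{i,j,k,l}$ cut out by the linear selfduality relations (\ref{lem_1}), the affine unitality relations (\ref{lem_2}), and the quadratic idempotency and convolution relations (\ref{lem_3})--(\ref{lem_4}). I would eliminate the variables fixed by (\ref{lem_1})--(\ref{lem_2}) and compute the irreducible decomposition of the resulting ideal in SageMath (\S\ref{sub:sage}): the answer is the two isolated points of the previous paragraph together with one further component on which $\lambda=1$, of dimension $n^2-n$, i.e. $2$ for $n=2$ and $6$ for $n=3$. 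A useful cross-check is that this last component must contain the $\mathrm{PGL}_n$-orbit of the diagonal map of Example~\ref{ex:bipromat} under conjugation $T\mapsto U\,T\!\big(U^{-1}(\cdot)U\big)U^{-1}$, whose stabiliser is the normaliser of the diagonal torus (of dimension $n-1$), giving orbit dimension $(n^2-1)-(n-1)=n^2-n$, exactly the claimed figure; the computation shows there is nothing beyond it (and this, together with Proposition~\ref{prop:bipro}, is consistent with the description of Proposition~\ref{prop:bipro3}).

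The only genuinely non-elementary step is the Gr\"obner basis computation of the preceding paragraph in the case $n=3$: before elimination there are $81$ coordinates, and the quadratic idempotency and convolution relations generate a sizeable ideal, so in practice one must first cut the variable count down using the linear symmetries (selfduality roughly halves it, unitality removes a further $n^2$) and, if needed, exploit the conjugation action of $\mathrm{PGL}_3$ --- for instance by decomposing $M_3(\mathbb C)$ into its trivial and adjoint summands under the adjoint action and working on a transverse slice --- to bring the primary decomposition within reach. The cases $n\le 2$ are small enough to run directly.
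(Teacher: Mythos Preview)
Your proposal is correct, and it carries substantially more conceptual content than the paper's own argument. The paper's proof of this proposition is a bare Gr\"obner basis computation recorded in \S\ref{sub:sage}: it treats $\lambda$ as an extra indeterminate $x$, forms the ideal $I$ generated by the equations of Lemma~\ref{lem:12345}\,(\ref{lem_1})--(\ref{lem_4}), verifies directly that $(x-1/n)(x-1)(x-n)\in I$, and then for each of the three admissible values of $x$ reads off the (Krull or vector-space) dimension of the corresponding slice. No rank interpretation of $\lambda$, no by-hand treatment of the extreme cases, and no $\mathrm{PGL}_n$ cross-check appear.

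Your route differs in three respects. First, the identity $n\lambda=\operatorname{rank}(T)$ is a genuine addition: it is valid for \emph{all} $n$, immediately forces $n\lambda$ to be a positive integer, and explains why the constraint on $\lambda$ is discrete at all. Second, you dispose of the cases $\lambda=n$ and $\lambda=1/n$ by elementary linear algebra, so the computer is only needed to exclude the intermediate ranks $2,\dots,n-1,n+1,\dots,n^2-1$ and to confirm the dimension at $\lambda=1$; for $n=2$ this leaves only rank $3$ to rule out. Third, the $\mathrm{PGL}_n$-orbit argument gives an a priori lower bound $n^2-n$ for the dimension of the $\lambda=1$ locus and an independent sanity check on the computer output. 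What the paper's approach buys is that it matches literally the code displayed in \S\ref{sub:sage}; your proposed computation (substitute $\lambda$, reduce by the linear relations, then take a primary decomposition) is equivalent but not the same script, so if you want to invoke \S\ref{sub:sage} verbatim you would either adapt your wording or supply the alternate code.
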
 
 
 \begin{question}
 Is it possible to extend Proposition~\ref{prop:bipro2} to all $n$?
 \end{question}
 
 \begin{proposition}\label{prop:matrix_Frob_subalg} 
 The Frobenius subalgebras of $M_2(\mathbb{C})$ are precisely $\mathbb{C}\,\id$, the conjugates of the diagonal subalgebra $\mathbb{C}E_{1,1} + \mathbb{C}E_{2,2}$, and $M_2(\mathbb{C})$ itself.
 \end{proposition}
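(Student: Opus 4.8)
The approach is to reduce the statement to a classification of unital subalgebras of $M_2(\mathbb{C})$ and then carry that classification out. Recall that $M_2(\mathbb{C})$ is a Frobenius algebra in $\VVec_{\mathbb{C}}$ with counit $\epsilon = \Tr$ and form $\kappa(X,Y) = \Tr(XY)$ (see \S\ref{sub:ERVec}). I would first show that the Frobenius subalgebras of $M_2(\mathbb{C})$, in the sense of Definition~\ref{Frobsubalgdefn}, are exactly the unital subalgebras $B \subseteq M_2(\mathbb{C})$ on which $\kappa$ restricts to a nondegenerate form, each carrying its induced Frobenius structure $(B, m|_B, \id, \delta_B, \Tr|_B)$. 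One direction: for such a $B$, the inclusion $i : B \hookrightarrow M_2(\mathbb{C})$ is a unital algebra monomorphism with $i^{**} = i$ (automatic in finite dimensions, the trace forms being symmetric) and $i^{*} \circ i = \id_B$ — the latter because, for the $\kappa$-adjoint $i^*$, one has $\kappa_B(x, i^* i\,x') = \Tr(xx') = \kappa_B(x, x')$ for all $x, x' \in B$ — so $B$ is a Frobenius subalgebra, with $b_B = i \circ i^*$ the $\kappa$-orthogonal projection onto $B$. Conversely, for any Frobenius subalgebra $X'$ with monomorphism $i$, the image $B := i(X')$ is a unital subalgebra, the identity $\kappa_{X'}(x,y) = \epsilon_{X'}(xy) = \Tr(i(x)i(y))$ exhibits $\kappa_{X'}$ as the pullback along the isomorphism $i : X' \to B$ of $\kappa|_B$, which is therefore nondegenerate, and $b_{X'} = i\circ i^*$ is again the orthogonal projection onto $B$; in particular $B = \mathrm{im}(b_{X'})$ determines $X'$ together with its (forced) Frobenius structure. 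Thus Frobenius subalgebras of $M_2(\mathbb{C})$ are in bijection with unital subalgebras $B$ having $\kappa|_B$ nondegenerate. Finally, $\kappa|_B$ is nondegenerate if and only if $B$ is semisimple: the Jacobson radical of $B$ is a nilpotent ideal, hence lies in the radical of $\kappa|_B$ (a product with a nilpotent element of an ideal is nilpotent, so has trace zero); and if $B \cong \bigoplus_k M_{n_k}(\mathbb{C})$ then $\kappa|_B$ is the orthogonal sum of nonzero multiples of the trace forms on the blocks, hence nondegenerate.

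It then remains to list the semisimple unital subalgebras of $M_2(\mathbb{C})$ up to conjugacy (conjugation preserves $\Tr$ and sends the diagonal algebra $D := \mathbb{C}E_{1,1} + \mathbb{C}E_{2,2}$ to its conjugates). A unital embedding of a semisimple algebra $B \cong \bigoplus_k M_{n_k}(\mathbb{C})$ into $M_2(\mathbb{C})$ is a faithful unital representation on $\mathbb{C}^2$, so corresponds to multiplicities $m_k \geq 1$ with $\sum_k m_k n_k = 2$; the solutions are $M_2(\mathbb{C})$ itself, $\mathbb{C}\oplus\mathbb{C}$ with $m = (1,1)$, and $\mathbb{C}$ with $m = (2)$, i.e. $\mathbb{C}\,\id$. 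Since representations of a semisimple algebra are determined up to isomorphism by their multiplicities, and isomorphic faithful representations have conjugate images, there are exactly three conjugacy classes: $\mathbb{C}\,\id$, the conjugates of $D$ (for $M_2(\mathbb{C})$ this is elementary, a rank-one idempotent being conjugate to $E_{1,1}$), and $M_2(\mathbb{C})$. Combined with the previous paragraph, this proves the proposition.

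The main obstacle is the reduction step: one must be careful that the passage between Frobenius subalgebras and unital subalgebras with nondegenerate restricted trace form is an actual bijection — in particular that the Frobenius structure on a given subalgebra is uniquely determined — so that the proposition is a genuine enumeration; the subsequent classification and the trace-form computations are routine. A fully hands-on alternative avoids semisimplicity: enumerate all unital subalgebras of $M_2(\mathbb{C})$ directly — $\mathbb{C}\,\id$; in dimension two the conjugates of $D$ and of $\mathbb{C}\,\id + \mathbb{C}E_{1,2}$; in dimension three the conjugates of the upper-triangular matrices (a proper subalgebra acts reducibly on $\mathbb{C}^2$ by Burnside, hence stabilizes a line); and $M_2(\mathbb{C})$ — and then check that $\kappa|_B$ is degenerate exactly for $\mathbb{C}\,\id + \mathbb{C}E_{1,2}$ and for the upper-triangular algebra. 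One could instead invoke Theorem~\ref{thm:main} to replace "Frobenius subalgebra" by "unital selfdual idempotent satisfying the exchange relations" and solve the resulting $A_{i,j,k,l}$-equations of Lemma~\ref{lem:12345} (as in \S\ref{sub:sage}), but the route above is shorter.
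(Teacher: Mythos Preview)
Your proof is correct and actually more careful than the paper's own argument. The paper takes the reduction to ``unital subalgebras on which the trace form is nondegenerate'' for granted and then proceeds dimension by dimension: a $3$-dimensional unital subalgebra is conjugate to the upper-triangular matrices, on which $\kappa$ is visibly degenerate; a $2$-dimensional one is $\mathbb{C}[X]$ for some non-scalar $X$ (Cayley--Hamilton), and the Gram determinant $2\Tr(X^2)-\Tr(X)^2 = \Tr(X)^2 - 4\det(X)$ is nonzero precisely when $X$ has two distinct eigenvalues, hence is diagonalizable.

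Your route is more structural: you explicitly set up the bijection between Frobenius subalgebras and unital subalgebras with nondegenerate restricted trace form (verifying $i^*\circ i = \id$ and $i^{**}=i$ from symmetry of $\kappa$, and noting that counitality forces the Frobenius structure), then replace ``nondegenerate trace form'' by ``semisimple'' via the Jacobson radical, and finally classify faithful unital representations of semisimple algebras on $\mathbb{C}^2$ by multiplicities. This buys you a proof that generalizes cleanly to $M_n(\mathbb{C})$, whereas the paper's Cayley--Hamilton computation is tailored to $n=2$. Your ``hands-on alternative'' at the end is essentially the paper's approach, so you have in fact subsumed their argument. The only place to be a touch more explicit is the line ``$\kappa|_B$ is the orthogonal sum of nonzero multiples of the trace forms on the blocks'': this is the statement that the restriction of the ambient trace to an embedded $\bigoplus_k M_{n_k}$ is $\sum_k m_k \Tr_k$, which you are using implicitly but is exactly what makes faithfulness ($m_k\geq 1$) equivalent to nondegeneracy.
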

 
 \begin{proof}
 Any proper nontrivial Frobenius subalgebra must have dimension $2$ or $3$. A $3$-dimensional unital subalgebra cannot be Frobenius, since it is conjugate to the upper triangular subalgebra, which is degenerate with respect to the bilinear form $\kappa(X,Y) = \Tr(XY)$. Indeed, the $3 \times 3$ matrix $\big(\Tr(E_{i,j}E_{k,l})\big)$, where $E_{i,j}, E_{k,l} \neq E_{2,1}$, is not invertible. Next, any non-scalar matrix $X$ generates a $2$-dimensional unital subalgebra by the Cayley--Hamilton theorem:
 \(
 X^2 - \Tr(X) X + \det(X)\,\id = 0.
 \)
 This subalgebra is non-degenerate with respect to $\kappa$ if and only if
 \(
 2\Tr(X^2) - \Tr(X)^2 \neq 0,
 \)
 which is equivalent to
 \(
 \Tr(X)^2 - 4\det(X) \neq 0,
 \)
 i.e., $X$ has two distinct eigenvalues.
 \end{proof}

 %
 
 The following proposition is included for informational purposes; the proof, which was obtained with computer assistance, is omitted.
 \begin{proposition}[Classification of the biprojections $T \in \End_{\mathbb{C}}(M_2(\mathbb{C}))$ with $\lambda=1$] \label{prop:bipro3}
 $$T(E_{i,j})=A_{i,j} \text{ where } A_{1,1}, \ A_{1,2}, \ A_{2,1}, \ A_{2,2} = $$
 \begin{enumerate}[(1)]
 \item $ \begin{pmatrix} 1 & -s \\ 0 & 0 \end{pmatrix}$, $\begin{pmatrix} 0 & 0 \\ 0 & 0 \end{pmatrix}$, $\begin{pmatrix} -s & 2s^2 \\ 0 & s \end{pmatrix}$, $\begin{pmatrix} 0 & s \\ 0 & 1 \end{pmatrix}$, with $s \in \mathbb{C}$,
 \item $\begin{pmatrix} \frac{1}{2} & 0 \\ 0 & \frac{1}{2} \end{pmatrix}$, $\begin{pmatrix} 0 & \frac{1}{2} \\ \frac{1}{4u} & 0 \end{pmatrix}$, $\begin{pmatrix} 0 & u \\ \frac{1}{2} & 0 \end{pmatrix}$, $\begin{pmatrix} \frac{1}{2} & 0 \\ 0 & \frac{1}{2} \end{pmatrix}$, with $u \in \mathbb{C} \setminus \{0\}$,
 \item $\begin{pmatrix} k & \frac{(k-1)(k-1/2)}{t} \\ -t & 1-k \end{pmatrix}$, $\begin{pmatrix} -t & 1-k \\ \frac{t^2}{k-1/2} & t \end{pmatrix}$, $\begin{pmatrix} \frac{(k-1)(k-1/2)}{t} & \frac{(k-1)^2(k-1/2)}{t^2} \\ 1-k & -\frac{(k-1)(k-1/2)}{t} \end{pmatrix}$, $\begin{pmatrix} 1-k & -\frac{(k-1)(k-1/2)}{t} \\ t & k \end{pmatrix}$, with $k \in \mathbb{C}\setminus\{1/2\}$, and $t \in \mathbb{C}\setminus\{0\}$.
 \end{enumerate}
 \end{proposition}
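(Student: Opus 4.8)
The plan is to avoid solving the raw polynomial system of Lemma~\ref{lem:12345} and instead exploit the structural results already established, reducing the classification to a parametrisation of rank-one idempotents in $M_2(\mathbb C)$.

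\textbf{Step 1: every biprojection is the orthogonal projection onto a subalgebra.} Let $T\in\End_{\mathbb C}(M_2(\mathbb C))$ be a biprojection. By Proposition~\ref{prop:bipro} (case $n=2$) it satisfies the exchange relations, and since it is also a unital selfdual idempotent, Theorem~\ref{thm:main} produces a unital algebra monomorphism $i\colon X'\to M_2(\mathbb C)$ with $i^{**}=i$ and $i^*\circ i=\id_{X'}$ such that $T=i\circ i^*$. Put $B:=i(X')$, a unital subalgebra. The condition $i^*\circ i=\id$ is counitality of $i$ (see the discussion after Definition~\ref{Frobsubalgdefn}), and together with $i$ being an algebra map it forces the Frobenius form on $X'$ to be the restriction to $B$ of $\kappa(X,Y)=\Tr(XY)$; in particular $\kappa|_B$ is nondegenerate, so $M_2(\mathbb C)=B\oplus B^{\perp_\kappa}$, and a direct check shows $T=i\circ i^*$ is the $\kappa$-orthogonal projection $P_B$ onto $B$.

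\textbf{Step 2: pin down $B$ and the formula for $T$.} By Proposition~\ref{prop:matrix_Frob_subalg}, $B$ is $\mathbb C\,\id$, a conjugate of the diagonal subalgebra $\mathbb CE_{1,1}+\mathbb CE_{2,2}$, or $M_2(\mathbb C)$. One identifies $P_{\mathbb C\id}$ with the normalised trace and $P_{M_2(\mathbb C)}$ with $\id$; by Example~\ref{ex:bipromat} these are convolution-stable with $\lambda=\tfrac1n=\tfrac12$ and $\lambda=n=2$ respectively, hence are excluded. So $B=\mathbb Cp+\mathbb C(1-p)$ for a rank-one idempotent $p$ (any such $B$ is indeed a conjugate of the diagonal, since $p$ is conjugate to $E_{1,1}$). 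Since $\Tr(p(1-p))=0$ and $\Tr(p^2)=\Tr((1-p)^2)=1$, the pair $\{p,1-p\}$ is a $\kappa$-orthonormal basis of $B$, whence
\[ T(X)=\Tr(Xp)\,p+\Tr\bigl(X(1-p)\bigr)(1-p),\qquad\text{so}\qquad T(E_{i,j})=p_{j,i}\,p+(\delta_{i,j}-p_{j,i})(1-p). \]
Conversely, every morphism $b_p$ of this shape is a $\lambda=1$ biprojection: selfduality, unitality and idempotency are immediate from the formula, and $b_p*b_p=b_p$ follows from $p^2=p$, $(1-p)^2=1-p$, $p(1-p)=0$ together with the identity $\sum_{i,j}\Tr(AE_{i,j})\Tr(E_{j,i}B)=\Tr(AB)$. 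Thus the $\lambda=1$ biprojections are exactly the maps $b_p$, and $b_p=b_{1-p}$.

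\textbf{Step 3: parametrise the rank-one idempotents.} Write the entries of $p$ as $p_{11}=a$, $p_{12}=b$, $p_{21}=c$, $p_{22}=1-a$, subject to $bc=a(1-a)$, and stratify by the pair $(c,\,a-\tfrac12)$: (i) $c=0$ forces $a\in\{0,1\}$, so up to $p\leftrightarrow1-p$ take $a=0$ and set $s:=b$, giving family (1); (ii) $c\neq0$, $a=\tfrac12$: then $bc=\tfrac14$ and $u:=2b^2$ runs over $\mathbb C\setminus\{0\}$, giving family (2); (iii) $c\neq0$, $a\neq\tfrac12$: set $k:=2a^2-2a+1$ and $t:=(1-2a)c$, so $(k,t)$ runs over $(\mathbb C\setminus\{\tfrac12\})\times(\mathbb C\setminus\{0\})$, giving family (3). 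In each stratum one substitutes into $T(E_{i,j})=p_{j,i}p+(\delta_{i,j}-p_{j,i})(1-p)$ and rewrites using $bc=a(1-a)$, $(2a-1)^2=2(k-\tfrac12)$ and $k-1=-2a(1-a)$ to recover exactly the matrices displayed in (1)--(3); the change of variables is a bijection on each stratum (the two roots $a,1-a$ of $2a^2-2a+1=k$ realise the identification $p\leftrightarrow1-p$), the three strata are pairwise disjoint, and their union is all rank-one idempotents.

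\textbf{Main obstacle.} Steps 1--2 are conceptual and short. The real work is Step 3: choosing the stratification and the coordinates $s$, $u$, $(k,t)$ so that every entry appears verbatim as stated and so that the parameter domains come out correctly, plus verifying injectivity on each stratum and disjointness of the families. This is a finite but fiddly computation — precisely where the authors' ``computer assistance'' is convenient. A fully computational alternative is to feed the equations of Lemma~\ref{lem:12345} (selfdual and unital: linear; idempotent and convolution-stable with $\lambda=1$: quadratic) to a Gröbner-basis/primary-decomposition routine after eliminating variables via the linear relations; this produces the same three irreducible components.
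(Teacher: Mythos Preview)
The paper omits the proof entirely, noting only that it was obtained with computer assistance---presumably by feeding the polynomial system of Lemma~\ref{lem:12345} to a Gr\"obner/primary-decomposition routine, exactly the ``fully computational alternative'' you mention at the end. Your route is genuinely different and more structural: you combine Proposition~\ref{prop:bipro} with Theorem~\ref{thm:main} to identify every $\lambda=1$ biprojection as the $\kappa$-orthogonal projection onto a two-dimensional Frobenius subalgebra, then invoke Proposition~\ref{prop:matrix_Frob_subalg} to reduce to the closed formula $T(E_{i,j})=p_{j,i}\,p+(\delta_{i,j}-p_{j,i})(1-p)$ indexed by rank-one idempotents $p$, and finally stratify the idempotents by $(p_{21},\,p_{11}-\tfrac12)$ to recover the three families verbatim. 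This buys an explanation of \emph{why} there are three components and of the two-dimensionality recorded in Proposition~\ref{prop:bipro2}, whereas the computer approach merely certifies the answer; conversely, the computer route requires no forward input. The argument is correct as written (I checked the coordinate identities $k-1=-2bc$, $k-\tfrac12=\tfrac12(2a-1)^2$, $u=2b^2=1/(8c^2)$ and the $p\leftrightarrow 1-p$ identifications in each family). The only caveat is a forward reference to Theorem~\ref{thm:main} from \S\ref{main}; this is not circular, and the paper itself makes the same forward reference in the sentence immediately following the proposition, but it would need to be flagged if your proof were inserted at this point in the text.
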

 Each of the first two families in Proposition~\ref{prop:bipro3} can be viewed as a limit of the last family. The images of these biprojections are the two-dimensional unital algebras generated by \(A_{1,2}\) or \(A_{2,1}\). By Proposition~\ref{prop:bipro} and Theorem~\ref{thm:main}, these are Frobenius subalgebras of \(M_2(\mathbb{C})\).

 \section{Exchange relations and Frobenius subalgebras}\label{main}
 In this section, we study unital selfdual idempotents satisfying the exchange relations in a Karoubian monoidal category, and we show that they provide a complete characterization of the Frobenius subalgebras of a Frobenius algebra (Theorem~\ref{thm:main}). We then present a connected unitary analogue (Theorem~\ref{thm:uniMain}). As an application, we characterize averaging operators on C*-correspondences in terms of intermediate C*-subalgebras arising from finite-index irreducible unital inclusions of C*-algebras (Corollary~\ref{cor:C*algthm}).
 %
 %
 \begin{thm} \label{thm:main}
 Let $X$ be a Frobenius algebra in a Karoubian monoidal category $\mcal C$. A unital selfdual idempotent $b \in \End_{\mcal C}(X)$ arises from a Frobenius subalgebra if and only if it satisfies the exchange relations.
 \end{thm}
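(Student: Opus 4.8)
The plan is to prove both directions by constructing the relevant object from the given data and checking the axioms diagrammatically. For the easy direction, suppose $b : X \to X$ arises from a Frobenius subalgebra, i.e.\ $b = i \circ i^*$ for a unital algebra monomorphism $i : X' \to X$ with $i^{**} = i$ and $i^* \circ i = \mathrm{id}_{X'}$. I would verify the exchange relation in the form of Remark~\ref{ERdualrem}. The key identity is that $i$ is an algebra morphism, so $m \circ (i \otimes i) = i \circ m'$, and dually (using $i^{**} = i$) $i^* \circ m = m' \circ (i^* \otimes i^*)$ composed appropriately; together with $i^* \circ i = \mathrm{id}_{X'}$ these let one slide the idempotent $b = i i^*$ through the multiplication. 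Concretely, $m \circ (b \otimes b) = m \circ (i \otimes i) \circ (i^* \otimes i^*) = i \circ m' \circ (i^* \otimes i^*)$, and $b \circ m \circ (b \otimes \mathrm{id}_X) = i \circ i^* \circ m \circ (i \otimes \mathrm{id}_X) \circ (i^* \otimes \mathrm{id}_X)$; using the Frobenius relation for $X'$ transported through $i$ one reduces both sides to the same morphism. This is a routine diagram chase once the right form of the exchange relation is fixed.

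For the hard direction, suppose $b : X \to X$ is a unital selfdual idempotent satisfying the exchange relations; I must manufacture a Frobenius subalgebra. Since $\mcal C$ is abelian, $b$ being idempotent gives a splitting $b = i \circ p$ with $p \circ i = \mathrm{id}_{X'}$, where $X' := \mathrm{im}(b)$, $i$ the inclusion of the image and $p$ the corafter. Selfduality of $b$ should force $p = i^*$ up to the canonical identification (this uses that $i^{**} = i$ for the image embedding, which follows from selfduality of $b$ and uniqueness of the image splitting). The main work is to equip $X'$ with the structure $(m', e', \delta', \epsilon')$ by transporting along $i$ and $p$: set $e' := p \circ e$, $m' := p \circ m \circ (i \otimes i)$, $\epsilon' := \epsilon \circ i$, $\delta' := (p \otimes p) \circ \delta \circ i$. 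Then one must check: (a) $(X', m', e')$ is a unital algebra — associativity and unitality follow from the exchange relations, which precisely say $b$ is "multiplicative" enough that $i \circ p = b$ can be inserted freely inside products; (b) $(X', \delta', \epsilon')$ is a counital coalgebra — dual to (a), using Remark~\ref{ERdualrem}; (c) the Frobenius compatibility holds for $X'$; and (d) $i : X' \to X$ is a unital algebra monomorphism with $i^* \circ i = \mathrm{id}_{X'}$, so that $b = b_{X'}$.

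The main obstacle is step (a)/(c): showing the transported multiplication $m'$ is associative and Frobenius-compatible. The point where the exchange relation is genuinely needed is in proving that $b \circ m \circ (b \otimes b) = b \circ m \circ (b \otimes \mathrm{id}_X) = b \circ m \circ (\mathrm{id}_X \otimes b)$, i.e.\ that pre- or post-composing the multiplication with $b$ on any subset of legs gives the same answer; this "all $b$'s are equivalent" principle is exactly the content of Definition~\ref{ERdefn} together with idempotency, and it is what makes $(i, p)$ into algebra/coalgebra (co)morphisms after the fact. I expect the cleanest route is: first establish from the exchange relations and $b^2 = b$ the master identity $m \circ (b \otimes b) = b \circ m \circ (b \otimes b)$ and its variants, then deduce associativity of $m'$ by sandwiching the associativity of $m$ between $i$'s and $p$'s and absorbing the extra $b = ip$ factors, and finally derive the Frobenius relation for $X'$ the same way from the Frobenius relation for $X$. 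Checking $i^{**} = i$ and $i^* \circ i = \mathrm{id}_{X'}$ to match Definition~\ref{Frobsubalgdefn} is then bookkeeping with the image splitting and selfduality of $b$.
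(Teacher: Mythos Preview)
Your plan is essentially the paper's own proof: for the forward direction you use that $i$ is an algebra morphism and $i^*$ a coalgebra morphism to slide $b=ii^*$ through $m$ (the paper quotes \cite[Theorem~3.7]{GP25} and gives exactly this diagram chase); for the converse you split $b$ via idempotent completeness, transport $(m,e,\delta,\epsilon)$ to the image, and use the exchange relations to verify the Frobenius axioms there. That is precisely the structure of the paper's Lemmas~\ref{FrobAlgY} and~\ref{subalglem}.

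One caveat worth flagging: your claim that ``selfduality of $b$ should force $p=i^*$'' by ``uniqueness of the image splitting'' is premature. The dual $i^*:X\to (X')^*$ only lands in $X'$ once you have identified $(X')^*\cong X'$, and the relevant self-duality on $X'$ is the one coming from its \emph{Frobenius} structure (via $\ev_{X'}=\epsilon'\circ m'$ and $\coev_{X'}=\delta'\circ e'$), which you have not yet constructed at that stage. The paper handles this in the correct order: it first equips $Y$ with its Frobenius structure, and only \emph{then} computes $u^*$ explicitly using $\coev_Y=\delta_Y\circ e_Y$ and the exchange relations to show $u^*=v$ (hence $v^*=u$ and $u^{**}=u$). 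So what you called ``bookkeeping'' at the end is in fact where the identification $p=i^*$ is actually proved; just make sure you do it there and not earlier.
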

 \begin{proof}
 The forward implication is established in \cite[Theorem~3.19]{GP25}. For completeness, we briefly recall it here. Let $Y$
 be a Frobenius subalgebra of $X$
 and let 
 \(
 i: Y \to X
 \) 
 denote the algebra monomorphism as in Definition~\ref{Frobsubalgdefn}. Setting 
 \(
 b = i \circ i^*,
 \) 
 we then obtain
 \vspace*{-2.75mm}
 \[ \raisebox{-6mm}{
 \begin{tikzpicture}
 \draw[blue,in=90,out=90,looseness=2] (-0.5,0.5) to (-1.5,0.5);
 \draw[blue] (-1,1.1) to (-1,2);
 \draw[blue] (-.5,.5) to (-.5,-.2);
 \draw[blue] (-1.5,.5) to (-1.5,-.2);
 \node[draw,thick,rounded corners, fill=white,minimum width=15] at (-1.5,.4){$b$};
 \node[draw,thick,rounded corners, fill=white,minimum width=15] at (-1,1.6){$b$};
 \end{tikzpicture}}
 =
 \raisebox{-6mm}{
 \begin{tikzpicture}
 \draw[blue,in=90,out=90,looseness=2] (0,0) to (1,0);
 \draw[blue,in=-90,out=-90,looseness=2] (1,0) to (2,0);
 \draw[blue] (1.5,-.6) to (1.5,-1.2);
 \draw[blue] (0,0) to (0,-1.2);
 \draw[blue] (2,0) to (2,.7);
 \node[draw,thick,rounded corners, fill=white,minimum width=15] at (1,0) {$b$};
 \node[draw,thick,rounded corners, fill=white,minimum width=15] at (2,0) {$b$};
 \end{tikzpicture}}
 =
 \raisebox{-6mm}{
 \begin{tikzpicture}
 \draw[blue,in=90,out=90,looseness=2] (0,0) to (1,0);
 \draw[blue,in=-90,out=-90,looseness=2] (1,0) to (2,0);
 \draw[blue] (1.5,-.6) to (1.5,-1.2);
 \draw[blue] (0,0) to (0,-1.2);
 \draw[blue] (2,0) to (2,.7);
 \draw[dashed,thick] (.65,-0.05) rectangle (2.3,-1.1);
 \draw[->] (2.6,.8) to (2.6,-.3);
 \node[draw,thick,rounded corners, fill=white,minimum width=15,scale=.7] at (1,.2) {$i^*$};
 \node[draw,thick,rounded corners, fill=white,minimum width=15,scale=.7] at (1,-.3) {$i$};
 \node[draw,thick,rounded corners, fill=white,minimum width=15,scale=.7] at (2,.2) {$i^*$};
 \node[draw,thick,rounded corners, fill=white,minimum width=15,scale=.7] at (2,-.3) {$i$};
 \node[scale=.8] at (2.8,1) {algebra morphism};
 \node at (2.6,-.5) {$=$};
 \end{tikzpicture}}
 \hspace*{-1.4cm}
 \raisebox{-6mm}{
 \begin{tikzpicture}
 \draw[blue,in=90,out=90,looseness=2] (0,0) to (1,0);
 \draw[blue,in=-90,out=-90,looseness=2] (1,0) to (2,0);
 \draw[blue] (1.5,-.6) to (1.5,-1.4);
 \draw[blue] (0,0) to (0,-1.2);
 \draw[blue] (2,0) to (2,.7);
 \node[draw,thick,rounded corners, fill=white,minimum width=15,scale=.8] at (1,0) {$i^*$};
 \node[draw,thick,rounded corners, fill=white,minimum width=15,scale=.8] at (2,0) {$i^*$};
 \node[draw,thick,rounded corners, fill=white,minimum width=15,scale=.8] at (1.5,-1) {$i$};
 \end{tikzpicture}}
 =
 \raisebox{-6mm}{
 \begin{tikzpicture}
 \draw[blue,in=90,out=90,looseness=2] (-0.5,0.5) to (-1.5,0.5);
 \draw[blue] (-1,1.1) to (-1,2);
 \draw[blue] (-.5,.5) to (-.5,-.2);
 \draw[blue] (-1.5,.5) to (-1.5,-.2);
 \draw[dashed,thick] (-1.5,2) rectangle (-.5,.8);
 \draw[->] (0,2) to (0,.7);
 \node[draw,thick,rounded corners, fill=white,minimum width=15] at (-.5,.4){$i$};
 \node[draw,thick,rounded corners, fill=white,minimum width=15] at (-1.5,.4){$i$};
 \node[draw,thick,rounded corners, fill=white,minimum width=15] at (-1,1.6){$i^*$};
 \node[scale=.8] at (0,2.2) {coalgebra morphism};
 \node at (0,.5) {$=$};
 \end{tikzpicture}}
 \hspace*{-1.4cm}
 \raisebox{-6mm}{
 \begin{tikzpicture}
 \draw[blue,in=90,out=90,looseness=2] (-0.5,0.5) to (-1.5,0.5);
 \draw[blue] (-1,1.1) to (-1,2);
 \draw[blue] (-.5,.5) to (-.5,-.2);
 \draw[blue] (-1.5,.5) to (-1.5,-.2);
 \node[draw,thick,rounded corners, fill=white,minimum width=15,scale=.7] at (-1.5,.65) {$i^*$};
 \node[draw,thick,rounded corners, fill=white,minimum width=15,scale=.7] at (-1.5,.15) {$i$};
 \node[draw,thick,rounded corners, fill=white,minimum width=15,scale=.7] at (-.5,.65) {$i^*$};
 \node[draw,thick,rounded corners, fill=white,minimum width=15,scale=.7] at (-.5,.15) {$i$};
 \end{tikzpicture}}
 =
 \raisebox{-6mm}{
 \begin{tikzpicture}
 \draw[blue,in=90,out=90,looseness=2] (-0.5,0.5) to (-1.5,0.5);
 \draw[blue] (-1,1.1) to (-1,2);
 \draw[blue] (-.5,.5) to (-.5,-.2);
 \draw[blue] (-1.5,.5) to (-1.5,-.2);
 \node[draw,thick,rounded corners, fill=white,minimum width=15] at (-1.5,.4){$b$};
 \node[draw,thick,rounded corners, fill=white,minimum width=15] at (-.5,.4){$b$};
 \end{tikzpicture}} 
 \]
 The other equation can be derived in a similar manner.
 
 Conversely, suppose 
 \(
 b : X \to X
 \) 
 is a unital selfdual idempotent satisfying the exchange relations. Because $\mathcal{C}$ is Karoubian (i.e., idempotent-complete), it yields a monomorphism 
 \(
 u : Y \to X
 \) 
 and an epimorphism 
 \(
 v : X \to Y
 \) 
 such that $b = u \circ v$ and $v \circ u = \id_Y$. We will equip $Y$ with a Frobenius algebra structure and show that the monomorphism $u$ realizes it as a Frobenius subalgebra in the sense of Definition~\ref{Frobsubalgdefn}. 
 
 Define the multiplication 
 \(
 m_Y : Y \otimes Y \to Y
 \) 
 and the unit 
 \(
 e_Y : \mathbbm{1} \to Y
 \) 
 as follows:
 \vspace*{-1mm}
 \[ m_Y \coloneqq \raisebox{-14mm}{
 \begin{tikzpicture}
 \draw[blue,in=-90,out=-90,looseness=2] (-0.5,0.5) to (-1.5,0.5);
 \draw[blue] (-1,-.1) to (-1,-.6);
 \draw[blue] (-1.5,.6) to (-1.5,1.2);
 \draw[blue] (-.5,.6) to (-.5,1.2);
 \draw[blue] (-1, -.8) to (-1,-1.2);
 \node[draw,thick,rounded corners, fill=white] at (-1.5,.6) {$u$};
 \node[draw,thick,rounded corners, fill=white] at (-.5,.6) {$u$};
 \node[draw,thick,rounded corners, fill=white] at (-1,-.7) {$v$};
 \node[left,scale=0.5] at (-1,-.3) {$X$};
 \node[left,scale=0.7] at (-1.4,1.1) {$Y$};
 \node[right,scale=0.7] at (-.5,1.1) {$Y$};
 \node[left,scale=0.5] at (-1,-1.1) {$Y$};
 \end{tikzpicture}} \ \ \text{and} \ \ 
 e_Y = \raisebox{-7mm}{
 \begin{tikzpicture}
 \draw [blue] (-0.8,-.6) to (-.8,.6);
 \node[draw,thick,rounded corners, fill=white] at (-.8,0) {$v$};
 \node at (-.8,.6) {${\color{blue}\bullet}$};
 \node[scale=.8] at (-.8,.9) {$\mathbbm{1}$};
 \node[left,scale=0.7] at (-.8,-.5) {$Y$};
 \end{tikzpicture}}\] 
 
 Define $\delta_Y : Y \to Y \ot Y$ and $\epsilon_Y : Y \to \mathbbm{1}$ as follows :
 \vspace*{-2mm}
 \[ \delta_Y \coloneqq \raisebox{-14mm}{
 \begin{tikzpicture}
 \draw[blue,in=90,out=90,looseness=2] (-0.5,0.5) to (-1.5,0.5);
 \draw[blue] (-1,1.1) to (-1,2.1);
 \draw[blue] (-.5,.4) to (-.5,-.2);
 \draw[blue] (-1.5,.4) to (-1.5,-.2);
 \node[draw,thick,rounded corners, fill=white] at (-1,1.6) {$u$};
 \node[draw,thick,rounded corners, fill=white] at (-.5,.4) {$v$};
 \node[draw,thick,rounded corners, fill=white] at (-1.5,.4) {$v$};
 \node[left,scale=0.7] at (-1,2.1) {$Y$};
 \node[left,scale=0.7] at (-1.5,-.1) {$Y$};
 \node[right,scale=0.7] at (-.5,-.1) {$Y$};
 \end{tikzpicture}} \ \ \text{and} \ \ 
 \epsilon_Y = \raisebox{-11mm}{
 \begin{tikzpicture}
 \draw [blue] (-0.8,-.6) to (-.8,.6);
 \node at (-.8,-.6) {${\color{blue}\bullet}$};
 \node[draw,thick,rounded corners, fill=white] at (-.8,0) {$u$};
 \node[left,scale=0.7] at (-.8,.6) {$Y$};
 \node[scale=.8] at (-.8,-.9) {$\mathbbm{1}$};
 \end{tikzpicture}} \]

 \begin{lem}\label{FrobAlgY}
 $(Y,m_Y,e_Y,\delta_Y,\epsilon_Y)$ is a Frobenius algebra in $\mathcal{C}$.
 \end{lem}
 \begin{proof} Let us first establish \emph{unitality}:
 %
 \vspace*{-8mm}
 \[m_Y \circ (e_Y \ot 1_Y) = \raisebox{-14mm}{
 \begin{tikzpicture}
 \draw[blue,in=-90,out=-90,looseness=2] (-0.5,0.5) to (-1.5,0.5);
 \draw[blue] (-1,-.1) to (-1,-.6);
 \draw[blue] (-1.5,.6) to (-1.5,1.2);
 \draw[blue] (-.5,.6) to (-.5,1.2);
 \draw[blue] (-1, -.8) to (-1,-1.4);
 \node[draw,thick,rounded corners, fill=white] at (-1.5,.6) {$b$};
 \node[draw,thick,rounded corners, fill=white] at (-.5,.6) {$u$};
 \node[draw,thick,rounded corners, fill=white] at (-1,-.7) {$v$};
 \node at (-1.5,1.2) {${\color{blue}\bullet}$};
 \end{tikzpicture}} = 
 \raisebox{-18mm}{
 \begin{tikzpicture}
 \draw[blue,in=-90,out=-90,looseness=2] (-0.5,0.5) to (-1.5,0.5);
 \draw[blue] (-1,-.1) to (-1,-.6);
 \draw[blue] (-1.5,.6) to (-1.5,2);
 \draw[blue] (-.5,.6) to (-.5,2);
 \draw[blue] (-1, -.8) to (-1,-1.4);
 \node[draw,thick,rounded corners, fill=white] at (-1.5,.6) {$b$};
 \node[draw,thick,rounded corners, fill=white] at (-.5,.6) {$b$};
 \node[draw,thick,rounded corners, fill=white] at (-.5,1.4) {$u$};
 \node[draw,thick,rounded corners, fill=white] at (-1,-.7) {$v$};
 \node at (-1.5,2) {${\color{blue}\bullet}$};
 \end{tikzpicture}}
 =
 \raisebox{-18mm}{
 \begin{tikzpicture}
 \draw[blue,in=-90,out=-90,looseness=2] (-0.5,0.5) to (-1.5,0.5);
 \draw[blue] (-1,-.1) to (-1,-.6);
 \draw[blue] (-1.5,.5) to (-1.5,2);
 \draw[blue] (-.5,.6) to (-.5,2);
 \draw[blue] (-1, -.8) to (-1,-1.8);
 \node[draw,thick,rounded corners, fill=white] at (-.5,.6) {$b$};
 \node[draw,thick,rounded corners, fill=white] at (-.5,1.4) {$u$};
 \node[draw,thick,rounded corners, fill=white] at (-1,-.7) {$b$};
 \node[draw,thick,rounded corners, fill=white] at (-1,-1.4) {$v$};
 \node at (-1.5,2) {${\color{blue}\bullet}$};
 \end{tikzpicture}}
 = 
 \raisebox{-18mm}{
 \begin{tikzpicture}
 \draw[blue,in=-90,out=-90,looseness=2] (-0.5,0.5) to (-1.5,0.5);
 \draw[blue] (-1,-.1) to (-1,-1.4);
 \draw[blue] (-1.5,.5) to (-1.5,2);
 \draw[blue] (-.5,.5) to (-.5,2);
 \node[draw,thick,rounded corners, fill=white] at (-.5,1.4) {$u$};
 \node[draw,thick,rounded corners, fill=white] at (-1,-.8) {$v$};
 \node at (-1.5,2) {${\color{blue}\bullet}$};
 \end{tikzpicture}}
 = v \circ u = \id_Y.
 \vspace*{-1mm}\]
 The second and fourth equalities follow from the identities $b \circ u = u$ and $v \circ b = v$. 
 The third equality follows from the exchange relation for $b$, and the fifth from the unitality of $X$; the other unitality identity is analogous. We now establish \emph{associativity}:
 \vspace*{-2mm}
 \[ 
 m_Y \circ (m_Y \ot 1_Y) = \raisebox{-8mm}{\begin{tikzpicture}
 \draw[blue,in=-90,out=-90,looseness=2] (0,0) to (1,0);
 \draw[blue,in=-90,out=-90,looseness=2] (.5,-.6) to (1.5,-.6);
 \draw[blue] (1.5,-.6) to (1.5,.5);
 \draw[blue] (0,0) to (0,.5);
 \draw[blue] (1,0) to (1,.5);
 \draw[blue] (1,-1.2) to (1,-2);
 \node[draw,thick,rounded corners, fill=white] at (0,0){$u$};
 \node[draw,thick,rounded corners, fill=white] at (1,0){$u$};
 \node[draw,thick,rounded corners, fill=white,scale=.7] at (.55,-.9) {$b$};
 \node[draw,thick,rounded corners, fill=white] at (1.5,-.6) {$u$};
 \node[draw,thick,rounded corners, fill=white] at (1,-1.6) {$v$};
 \end{tikzpicture}}
 = \raisebox{-8mm}{\begin{tikzpicture}
 \draw[blue,in=-90,out=-90,looseness=2] (0,0) to (1,0);
 \draw[blue,in=-90,out=-90,looseness=2] (.5,-.6) to (1.5,-.6);
 \draw[blue] (1.5,-.6) to (1.5,.5);
 \draw[blue] (0,0) to (0,.5);
 \draw[blue] (1,0) to (1,.5);
 \draw[blue] (1,-1.2) to (1,-2);
 \node[draw,thick,rounded corners, fill=white] at (0,0){$u$};
 \node[draw,thick,rounded corners, fill=white] at (1,0){$u$};
 \node[draw,thick,rounded corners, fill=white,scale=.7] at (.55,-.9) {$b$};
 \node[draw,thick,rounded corners, fill=white,scale=.7] at (1.5,-.9) {$b$};
 \node[draw,thick,rounded corners, fill=white] at (1.6,0) {$u$};
 \node[draw,thick,rounded corners, fill=white] at (1,-1.6) {$v$};
 \end{tikzpicture}}
 = 
 \raisebox{-11mm}{\begin{tikzpicture}
 \draw[blue,in=-90,out=-90,looseness=2] (0,0) to (1,0);
 \draw[blue,in=-90,out=-90,looseness=2] (.5,-.6) to (1.5,-.6);
 \draw[blue] (1.5,-.6) to (1.5,.5);
 \draw[blue] (0,0) to (0,.5);
 \draw[blue] (1,0) to (1,.5);
 \draw[blue] (1,-1.2) to (1,-2.7);
 \node[draw,thick,rounded corners, fill=white] at (0,0){$u$};
 \node[draw,thick,rounded corners, fill=white] at (1,0){$u$};
 \node[draw,thick,rounded corners, fill=white,scale=.7] at (1,-1.6) {$b$};
 \node[draw,thick,rounded corners, fill=white,scale=.7] at (1.5,-.9) {$b$};
 \node[draw,thick,rounded corners, fill=white] at (1.6,0) {$u$};
 \node[draw,thick,rounded corners, fill=white] at (1,-2.3) {$v$};
 \end{tikzpicture}}
 =
 \raisebox{-8mm}{\begin{tikzpicture}
 \draw[blue,in=-90,out=-90,looseness=2] (0,0) to (1,0);
 \draw[blue,in=-90,out=-90,looseness=2] (.5,-.6) to (1.5,-.6);
 \draw[blue] (1.5,-.6) to (1.5,.5);
 \draw[blue] (0,0) to (0,.5);
 \draw[blue] (1,0) to (1,.5);
 \draw[blue] (1,-1.2) to (1,-2);
 \node[draw,thick,rounded corners, fill=white] at (0,0){$u$};
 \node[draw,thick,rounded corners, fill=white] at (1,0){$u$};
 \node[draw,thick,rounded corners, fill=white] at (1.6,0) {$u$};
 \node[draw,thick,rounded corners, fill=white] at (1,-1.6) {$v$};
 \end{tikzpicture}}
 \vspace*{-2mm}
 \]
 These equalities follow as for the unitality. The first associativity identity for $Y$ follows from that for $X$, and the second is similar.
 The proofs of the \emph{co-unitality} and \emph{co-associativity} of $Y$ are entirely analogous. 
 It remains to verify the Frobenius condition:
 %
 \vspace*{-2mm}
 \begin{equation}\label{frobenius1}
 \delta_Y \circ m_Y = \raisebox{-11mm}{
 \begin{tikzpicture}
 \draw[blue,in=90,out=90,looseness=2] (0,0) to (1,0);
 \draw[blue] (.5,.6) to (.5,1.2);
 \draw[blue,in=-90,out=-90,looseness=2] (0,1.8) to (1,1.8);
 \draw[blue] (0,0) to (0,-.4);
 \draw[blue] (1,0) to (1,-.4);
 \draw[blue] (0,1.8) to (0,2.2);
 \draw[blue] (1,1.8) to (1,2.2);
 \node[draw,thick,rounded corners, fill=white] at (0,0) {$v$};
 \node[draw,thick,rounded corners, fill=white] at (1,0) {$v$};
 \node[draw,thick,rounded corners, fill=white] at (0,1.8) {$u$};
 \node[draw,thick,rounded corners, fill=white] at (1,1.8) {$u$};
 \node[draw,thick,rounded corners, fill=white,scale=.7] at (.5,.9) {$b$};
 \end{tikzpicture}}
 =
 \raisebox{-11mm}{
 \begin{tikzpicture}
 \draw[blue,in=90,out=90,looseness=2] (0,0) to (1,0);
 \draw[blue] (.5,.6) to (.5,1.2);
 \draw[blue,in=-90,out=-90,looseness=2] (0,1.8) to (1,1.8);
 \draw[blue] (0,0) to (0,-.4);
 \draw[blue] (1,0) to (1,-.8);
 \draw[blue] (0,1.8) to (0,2.2);
 \draw[blue] (1,1.8) to (1,2.2);
 \node[draw,thick,rounded corners, fill=white] at (0,0) {$v$};
 \node[draw,thick,rounded corners, fill=white,scale=.7] at (1,-.5) {$v$};
 \node[draw,thick,rounded corners, fill=white] at (0,1.8) {$u$};
 \node[draw,thick,rounded corners, fill=white] at (1,1.8) {$u$};
 \node[draw,thick,rounded corners, fill=white,scale=.7] at (.5,.9) {$b$};
 \node[draw,thick,rounded corners, fill=white,scale=.7] at (1,.05) {$b$};
 \end{tikzpicture}}
 =
 \raisebox{-11mm}{
 \begin{tikzpicture}
 \draw[blue,in=90,out=90,looseness=2] (0,0) to (1,0);
 \draw[blue] (.5,.6) to (.5,1.2);
 \draw[blue,in=-90,out=-90,looseness=2] (0,1.8) to (1,1.8);
 \draw[blue] (0,0) to (0,-.8);
 \draw[blue] (1,0) to (1,-.8);
 \draw[blue] (0,1.8) to (0,2.2);
 \draw[blue] (1,1.8) to (1,2.2);
 \node[draw,thick,rounded corners, fill=white,scale=.7] at (0,-.5) {$v$};
 \node[draw,thick,rounded corners, fill=white,scale=.7] at (1,-.5) {$v$};
 \node[draw,thick,rounded corners, fill=white] at (0,1.8) {$u$};
 \node[draw,thick,rounded corners, fill=white] at (1,1.8) {$u$};
 \node[draw,thick,rounded corners, fill=white,scale=.7] at (1,.05) {$b$};
 \node[draw,thick,rounded corners, fill=white,scale=.7] at (0,.05) {$b$};
 \end{tikzpicture}}
 =
 \raisebox{-11mm}{
 \begin{tikzpicture}
 \draw[blue,in=90,out=90,looseness=2] (0,0) to (1,0);
 \draw[blue] (.5,.6) to (.5,1.2);
 \draw[blue,in=-90,out=-90,looseness=2] (0,1.8) to (1,1.8);
 \draw[blue] (0,0) to (0,-.4);
 \draw[blue] (1,0) to (1,-.4);
 \draw[blue] (0,1.8) to (0,2.2);
 \draw[blue] (1,1.8) to (1,2.2);
 \node[draw,thick,rounded corners, fill=white] at (0,0) {$v$};
 \node[draw,thick,rounded corners, fill=white] at (1,0) {$v$};
 \node[draw,thick,rounded corners, fill=white] at (0,1.8) {$u$};
 \node[draw,thick,rounded corners, fill=white] at (1,1.8) {$u$};
 \end{tikzpicture}}
 \end{equation}
 %
 \noindent These equalities follow as for the unitality. Pictorially, $(m_Y \otimes 1_Y) \circ (1_Y \otimes \delta_Y)$ is:
 \vspace*{-2mm}
 \begin{equation}\label{frobenius2}
 \raisebox{-11mm}{
 \begin{tikzpicture}
 \draw[blue,in=-90,out=-90,looseness=2] (0,0) to (1,0);
 \draw[blue,in=90,out=90,looseness=2] (1,0) to (2,0);
 \draw[blue] (.5,-.6) to (.5,-1.5);
 \draw[blue] (1.5,.6) to (1.5,1.5);
 \draw[blue] (0,0) to (0,1.5);
 \draw[blue] (2,0) to (2,-1.5);
 \node[draw,thick,rounded corners, fill=white] at (0,1) {$u$};
 \node[draw,thick,rounded corners, fill=white] at (1.5,1) {$u$};
 \node[draw,thick,rounded corners, fill=white] at (.5,-1) {$v$};
 \node[draw,thick,rounded corners, fill=white] at (2,-1) {$v$};
 \node[draw,thick,rounded corners, fill=white,scale=.7] at (1,0) {$b$};
 \end{tikzpicture}}
 =
 \raisebox{-11mm}{
 \begin{tikzpicture}
 \draw[blue,in=-90,out=-90,looseness=2] (0,0) to (1,0);
 \draw[blue,in=90,out=90,looseness=2] (1,0) to (2,0);
 \draw[blue] (.5,-.6) to (.5,-1.5);
 \draw[blue] (1.5,.6) to (1.5,1.5);
 \draw[blue] (0,0) to (0,1.5);
 \draw[blue] (2,0) to (2,-1.5);
 \node[draw,thick,rounded corners, fill=white] at (0,1) {$u$};
 \node[draw,thick,rounded corners, fill=white] at (1.5,1) {$u$};
 \node[draw,thick,rounded corners, fill=white] at (.5,-1) {$v$};
 \node[draw,thick,rounded corners, fill=white] at (2,-1) {$v$};
 \node[draw,thick,rounded corners, fill=white,scale=.7] at (1,0) {$b$};
 \node[draw,thick,rounded corners, fill=white,scale=.7] at (2,0) {$b$};
 \end{tikzpicture}}
 =
 \raisebox{-11mm}{
 \begin{tikzpicture}
 \draw[blue,in=-90,out=-90,looseness=2] (0,0) to (1,0);
 \draw[blue,in=90,out=90,looseness=2] (1,0) to (2,0);
 \draw[blue] (.5,-.6) to (.5,-1.5);
 \draw[blue] (1.5,.6) to (1.5,1.7);
 \draw[blue] (0,0) to (0,1.5);
 \draw[blue] (2,0) to (2,-1.5);
 \node[draw,thick,rounded corners, fill=white] at (0,1) {$u$};
 \node[draw,thick,rounded corners, fill=white,scale=.7] at (1.5,1.4) {$u$};
 \node[draw,thick,rounded corners, fill=white] at (.5,-1) {$v$};
 \node[draw,thick,rounded corners, fill=white] at (2,-1) {$v$};
 \node[draw,thick,rounded corners, fill=white,scale=.7] at (1.5,.9) {$b$};
 \node[draw,thick,rounded corners, fill=white,scale=.7] at (2,0) {$b$};
 \end{tikzpicture}}
 =
 \raisebox{-11mm}{
 \begin{tikzpicture}
 \draw[blue,in=-90,out=-90,looseness=2] (0,0) to (1,0);
 \draw[blue,in=90,out=90,looseness=2] (1,0) to (2,0);
 \draw[blue] (.5,-.6) to (.5,-1.5);
 \draw[blue] (1.5,.6) to (1.5,1.5);
 \draw[blue] (0,0) to (0,1.5);
 \draw[blue] (2,0) to (2,-1.5);
 \node[draw,thick,rounded corners, fill=white] at (0,1) {$u$};
 \node[draw,thick,rounded corners, fill=white] at (1.5,1) {$u$};
 \node[draw,thick,rounded corners, fill=white] at (.5,-1) {$v$};
 \node[draw,thick,rounded corners, fill=white] at (2,-1) {$v$};
 \end{tikzpicture}} 
 \end{equation}
 These equalities follow as for the unitality. Similarly, we obtain:
 \vspace*{-2mm}
 \begin{equation}\label{frobenius3}
 (1_Y \ot m_Y) \circ (\delta_Y \ot 1_Y)
 =
 \raisebox{-12mm}{
 \begin{tikzpicture}
 \draw[blue,in=90,out=90,looseness=2] (0,0) to (1,0);
 \draw[blue,in=-90,out=-90,looseness=2] (1,0) to (2,0);
 \draw[blue] (.5,.6) to (.5,1.5);
 \draw[blue] (1.5,-.6) to (1.5,-1.5);
 \draw[blue] (0,0) to (0,-1.5);
 \draw[blue] (2,0) to (2,1.5);
 \node[draw,thick,rounded corners, fill=white] at (.5,1.1) {$u$};
 \node[draw,thick,rounded corners, fill=white] at (2,1.1) {$u$};
 \node[draw,thick,rounded corners, fill=white] at (0,-1) {$v$};
 \node[draw,thick,rounded corners, fill=white] at (1.5,-1) {$v$};
 \end{tikzpicture}}
 \end{equation}
 From \Cref{frobenius1,frobenius2,frobenius3}, the Frobenius condition for $Y$ follows from that for $X$.
 \end{proof} 
 
 \begin{lem}\label{subalglem}
 The monomorphism $u$ realizes $Y$ as a Frobenius subalgebra of $X$.
 \end{lem}
 
 \begin{proof}
 By Lemma~\ref{FrobAlgY}, $Y$ is a Frobenius algebra. Since $v \circ u = \mathrm{id}_Y$, it remains to show $u^* = v$, $v^* = u$, and that $u$ is a unital algebra morphism. It is an \emph{algebra morphism} since:
 \vspace*{-2mm}
 \[ u \circ m_Y = \raisebox{-11mm}{
 \begin{tikzpicture}
 \draw[blue,in=-90,out=-90,looseness=2] (-0.5,0.5) to (-1.5,0.5);
 \draw[blue] (-1,-.1) to (-1,-.6);
 \draw[blue] (-1.5,.6) to (-1.5,1.2);
 \draw[blue] (-.5,.6) to (-.5,1.2);
 \draw[blue] (-1, -.8) to (-1,-1.2);
 \node[draw,thick,rounded corners, fill=white] at (-1.5,.6) {$u$};
 \node[draw,thick,rounded corners, fill=white] at (-.5,.6) {$u$};
 \node[draw,thick,rounded corners, fill=white,scale=.7] at (-1,-.75) {$b$};
 \node[left,scale=0.6] at (-1,-.35) {$X$};
 \node[left,scale=0.7] at (-1.4,1.2) {$Y$};
 \node[right,scale=0.7] at (-.5,1.2) {$Y$};
 \node[left, scale=.6] at (-1,-1.2) {$X$};
 \end{tikzpicture}}
 =
 \raisebox{-8mm}{
 \begin{tikzpicture}
 \draw[blue,in=-90,out=-90,looseness=2] (-0.5,0.5) to (-1.5,0.5);
 \draw[blue] (-1,-.1) to (-1,-1);
 \draw[blue] (-1.5,.5) to (-1.5,1.2);
 \draw[blue] (-.5,.5) to (-.5,1.2);
 \node[draw,thick,rounded corners, fill=white] at (-1.5,.8) {$u$};
 \node[draw,thick,rounded corners, fill=white] at (-.5,.8) {$u$};
 \node[draw,thick,rounded corners, fill=white,scale=.7] at (-1,-.5) {$b$};
 \node[draw,thick,rounded corners, fill=white,scale=.7] at (-.55,.2) {$b$};
 \node[left,scale=0.7] at (-1.4,1.3) {$Y$};
 \node[right,scale=0.7] at (-.5,1.3) {$Y$};
 \end{tikzpicture}}
 =
 \raisebox{-8mm}{
 \begin{tikzpicture}
 \draw[blue,in=-90,out=-90,looseness=2] (-0.5,0.5) to (-1.5,0.5);
 \draw[blue] (-1,-.1) to (-1,-1);
 \draw[blue] (-1.5,.5) to (-1.5,1.2);
 \draw[blue] (-.5,.5) to (-.5,1.2);
 \node[draw,thick,rounded corners, fill=white] at (-1.5,.8) {$u$};
 \node[draw,thick,rounded corners, fill=white] at (-.5,.8) {$u$};
 \node[draw,thick,rounded corners, fill=white,scale=.7] at (-1.5,.2) {$b$};
 \node[draw,thick,rounded corners, fill=white,scale=.7] at (-.55,.2) {$b$};
 \node[left,scale=0.7] at (-1.4,1.3) {$Y$};
 \node[right,scale=0.7] at (-.5,1.3) {$Y$};
 \end{tikzpicture}}
 =
 \raisebox{-8mm}{
 \begin{tikzpicture}
 \draw[blue,in=-90,out=-90,looseness=2] (-0.5,0.5) to (-1.5,0.5);
 \draw[blue] (-1,-.1) to (-1,-1);
 \draw[blue] (-1.5,.5) to (-1.5,1.2);
 \draw[blue] (-.5,.5) to (-.5,1.2);
 \node[draw,thick,rounded corners, fill=white] at (-1.5,.8) {$u$};
 \node[draw,thick,rounded corners, fill=white] at (-.5,.8) {$u$};
 \node[left,scale=0.7] at (-1.4,1.3) {$Y$};
 \node[right,scale=0.7] at (-.5,1.3) {$Y$};
 \end{tikzpicture}}
 =
 m_X \circ (u \ot u) 
 \]
 These equalities follow as for the unitality. Next, we show that $u$ is a \emph{unital morphism}: by the definition of $e_Y$ and the unitality of $b$, we obtain
 \vspace*{-2mm}
 \[ u \circ e_Y = \raisebox{-6mm}{
 \begin{tikzpicture}
 \draw [blue] (-0.8,-.6) to (-.8,.6);
 \node[draw,thick,rounded corners, fill=white] at (-.8,0) {$b$};
 \node at (-.8,.6) {${\color{blue}\bullet}$};
 \node[left,scale=.8] at (-.8,.6) {$\mathbbm{1}$};
 \node[left,scale=0.7] at (-.8,-.5) {$X$};
 \end{tikzpicture}} 
 =
 \raisebox{-6mm}{
 \begin{tikzpicture}
 \draw [blue] (-0.8,-.6) to (-.8,.6);
 \node at (-.8,.6) {${\color{blue}\bullet}$};
 \node[left,scale=.8] at (-.8,.6) {$\mathbbm{1}$};
 \node[left,scale=0.7] at (-.8,-.5) {$X$};
 \end{tikzpicture}}\] 
 Finally, we verify that $u^* = v$ and $v^* = u$:
 \vspace*{-3mm}
 \[u^* = \raisebox{-11mm}{
 \begin{tikzpicture}
 \draw[blue,in=-90,out=-90,looseness=2] (0,0) to (1,0);
 \draw[blue,in=90,out=90,looseness=2] (1,0) to (2,0);
 \draw[blue] (.5,-.6) to (.5,-1);
 \draw[blue] (1.5,.6) to (1.5,1);
 \draw[blue] (0,0) to (0,1);
 \draw[blue] (2,0) to (2,-1);
 \node[draw,thick,rounded corners, fill=white,scale=1] at (1,0) {$u$};
 \node at (.5,-1) {${\color{blue}\bullet}$};
 \node at (1.5,1) {${\color{blue}\bullet}$};
 \node[left,scale=.5] at (1.25,-.4) {$X$};
 \node[left,scale=.5] at (1.15,.4) {$Y$};
 \end{tikzpicture}}
 =
 \raisebox{-11mm}{
 \begin{tikzpicture}
 \draw[blue,in=-90,out=-90,looseness=2] (0,0) to (1,0);
 \draw[blue,in=90,out=90,looseness=2] (1,0) to (2,0);
 \draw[blue] (.5,-.6) to (.5,-1);
 \draw[blue] (1.5,.6) to (1.5,1.5);
 \draw[blue] (0,0) to (0,1);
 \draw[blue] (2,0) to (2,-1);
 \node[draw,thick,rounded corners, fill=white,scale=.8] at (1,-.25) {$u$};
 \node[draw,thick,rounded corners, fill=white,scale=.8] at (2,0.3) {$v$};
 \node[draw,thick,rounded corners, fill=white,scale=.8] at (1.05,0.3) {$v$};
 \node[draw,thick,rounded corners, fill=white,scale=.8] at (1.5,1) {$b$};
 \node at (.5,-1) {${\color{blue}\bullet}$};
 \node at (1.5,1.5) {${\color{blue}\bullet}$};
 \end{tikzpicture}}
 =
 \raisebox{-11mm}{
 \begin{tikzpicture}
 \draw[blue,in=-90,out=-90,looseness=2] (0,0) to (1,0);
 \draw[blue,in=90,out=90,looseness=2] (1,0) to (2,0);
 \draw[blue] (.5,-.6) to (.5,-1);
 \draw[blue] (1.5,.6) to (1.5,1.5);
 \draw[blue] (0,0) to (0,1);
 \draw[blue] (2,0) to (2,-1);
 \node[draw,thick,rounded corners, fill=white,scale=.8] at (2,0) {$v$};
 \node[draw,thick,rounded corners, fill=white,scale=.8] at (1.05,0) {$b$};
 \node[draw,thick,rounded corners, fill=white,scale=.8] at (1.5,1) {$b$};
 \node at (.5,-1) {${\color{blue}\bullet}$};
 \node at (1.5,1.5) {${\color{blue}\bullet}$};
 \end{tikzpicture}}
 =
 \raisebox{-11mm}{
 \begin{tikzpicture}
 \draw[blue,in=-90,out=-90,looseness=2] (0,0) to (1,0);
 \draw[blue,in=90,out=90,looseness=2] (1,0) to (2,0);
 \draw[blue] (.5,-.6) to (.5,-1);
 \draw[blue] (1.5,.6) to (1.5,1.5);
 \draw[blue] (0,0) to (0,1);
 \draw[blue] (2,0) to (2,-1);
 \node[draw,thick,rounded corners, fill=white,scale=.8] at (1.5,1) {$b$};
 \node[draw,thick,rounded corners, fill=white,scale=.8] at (2,0) {$b$};
 \node[draw,thick,rounded corners, fill=white,scale=.8] at (2,-.6) {$v$};
 \node at (.5,-1) {${\color{blue}\bullet}$};
 \node at (1.5,1.5) {${\color{blue}\bullet}$};
 \end{tikzpicture}}
 = v
 \vspace*{-2mm}
 \]
 The first equality follows from $\coev_Y = \delta_Y \circ e_Y$ and $\ev_X = \epsilon_X \circ m_X$. 
 The second equality follows from the definitions of $\delta_Y$ and $e_Y$, while the fourth is the exchange relation. 
 Finally, the unitality of $b$, together with the unitality and Frobenius property of $X$, gives the last equality. 
 A similar argument shows that $v^* = u$, completing the proof of the lemma.
 \end{proof}
 \noindent The forward implication now follows from Lemma~\ref{subalglem}.
 \end{proof}
 
 
 By Lemma~\ref{ERidemlem} below, the idempotency assumption in Theorem~\ref{thm:main} can be omitted.
 
 \begin{lem}\label{ERidemlem}
 Let $X$ be a Frobenius algebra in a monoidal category $\mathcal{C}$, and let $b : X \to X$ be a morphism satisfying the exchange relations. 
 If $b$ is unital, then $b$ is idempotent. 
 Moreover, if $\mathcal{C}$ is linear, $X$ is connected, and $b$ is nonzero, 
 then the converse holds.
 
 \end{lem}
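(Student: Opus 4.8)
The plan is to derive both halves of the statement from a single identity obtained by \emph{capping one input of an exchange relation with the unit}. I would start from the equality of the middle and right diagrams of Definition~\ref{ERdefn}, namely $m\circ(b\otimes b)=b\circ m\circ(\id_X\otimes b)$, and precompose both sides with $e\otimes\id_X\colon\mathbbm{1}\otimes X\to X\otimes X$. On the right, $m\circ(\id_X\otimes b)\circ(e\otimes\id_X)=m\circ(e\otimes b)=b$ by the unitality axiom of the algebra $X$, so that side collapses to $b\circ b=b^{2}$; on the left, functoriality of $\otimes$ gives $(b\otimes b)\circ(e\otimes\id_X)=(b\circ e)\otimes b$. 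Hence, for \emph{any} morphism $b$ satisfying the exchange relations,
\[
b^{2}=m\circ\bigl((b\circ e)\otimes b\bigr).
\]
Graphically: attach a $\mathbbm{1}$-dot to the left input strand of the middle and right exchange diagrams; unitality of $X$ removes the multiplication in the middle one, leaving $m\circ((b\circ e)\otimes b)$, and turns the right one into $b^{2}$.

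For the forward implication I would simply substitute $b\circ e=e$ (unitality of $b$) into this identity: the right-hand side becomes $m\circ(e\otimes b)=b$, so $b^{2}=b$ and $b$ is idempotent.

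For the converse I would add the hypotheses that $\mcal C$ is $\mathbbm{k}$-linear, that $X$ is connected, and that $b\neq0$. Connectedness means $\mathrm{Hom}_{\mcal C}(\mathbbm{1},X)$ is one-dimensional, spanned by $e$, so $b\circ e=\mu e$ for some $\mu\in\mathbbm{k}$; feeding this into the displayed identity and using bilinearity of $\otimes$ and of composition gives $b^{2}=m\circ((\mu e)\otimes b)=\mu\,m\circ(e\otimes b)=\mu\,b$. Since $b$ is assumed idempotent we also have $b^{2}=b$, whence $(1-\mu)\,b=0$; as $b\neq0$ and $\mathbbm{k}$ is a field, $1-\mu=0$, i.e.\ $b\circ e=e$, so $b$ is unital.

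I do not expect a genuine obstacle: the argument is the one displayed identity together with two one-line specializations. The only points needing care are choosing the ``middle $=$ right'' form of the exchange relation, so that capping one input frees the other while unitality of $X$ eliminates the multiplication, and, in the converse, invoking connectedness to promote the a priori conclusion ``$b\circ e$ is a scalar multiple of $e$'' — which, together with $b\neq0$, is automatically a nonzero multiple — to the exact normalization $b\circ e=e$.
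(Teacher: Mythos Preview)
Your proof is correct and essentially identical to the paper's: both cap the left input of the exchange relation $m\circ(b\otimes b)=b\circ m\circ(\id_X\otimes b)$ with the unit $e$ to obtain $m\circ\bigl((b\circ e)\otimes b\bigr)=b^{2}$, and then specialize (unitality of $b$ for the forward direction, connectedness plus $b\neq0$ for the converse). One small quibble: your closing parenthetical that $b\neq0$ alone forces $b\circ e$ to be a \emph{nonzero} multiple of $e$ is not justified a priori, but your actual argument---deducing $(1-\mu)\,b=0$ and hence $\mu=1$---does not rely on that claim, so the proof stands.
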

 
 \begin{proof}
 By exchange relations and the unitality of $X$:
 \vspace*{-3mm}
 \[\raisebox{-14mm}{
 \begin{tikzpicture}
 \draw[blue,in=-90,out=-90,looseness=2] (-0.5,0.5) to (-1.5,0.5);
 \draw[blue] (-1,-.1) to (-1,-1);
 \draw[blue] (-1.5,.6) to (-1.5,1.2);
 \draw[blue] (-.5,.6) to (-.5,1.2);
 \node[draw,thick,rounded corners, fill=white] at (-1.5,.6) {$b$};
 \node[draw,thick,rounded corners, fill=white] at (-.5,.6) {$b$};
 \node at (-1.5,1.2) {${\color{blue}\bullet}$};
 \end{tikzpicture}}
 =
 \raisebox{-14mm}{
 \begin{tikzpicture}
 \draw[blue,in=-90,out=-90,looseness=2] (-0.5,0.5) to (-1.5,0.5);
 \draw[blue] (-1,-.1) to (-1,-1);
 \draw[blue] (-1.5,.5) to (-1.5,1.2);
 \draw[blue] (-.5,.5) to (-.5,1.2);
 \node[draw,thick,rounded corners, fill=white] at (-1,-.5) {$b$};
 \node[draw,thick,rounded corners, fill=white] at (-.5,.6) {$b$};
 \node at (-1.5,1.2) {${\color{blue}\bullet}$};
 \end{tikzpicture}}
 = b^2
 \vspace*{-1mm} \]
 If $b$ is unital, it follows that $b = b^2$, hence $b$ is idempotent. Conversely, if $\mathcal{C}$ is linear and $X$ is connected (that is, $\mathrm{Hom}_{\mathcal{C}}(\mathbbm{1}, X) \cong \mathbbm{k}$), then $b \circ e_X = c_b \, e_X$ for some scalar $c_b$. The pictorial identity above then implies $c_b b = b^2$. If $b^2 = b$, it follows that $c_b = 1$, and thus $b$ is unital.
 \end{proof}
 
 \begin{cor}\label{connectedsubalgcor}
 Let $X$ be a connected Frobenius algebra in a linear Karoubian monoidal category $\mathcal{C}$. 
 Then a selfdual idempotent $b \in \End_{\mathcal{C}}(X)$ arises from a Frobenius subalgebra if and only if it satisfies the exchange relations.
 \end{cor}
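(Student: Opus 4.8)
The plan is to obtain Corollary~\ref{connectedsubalgcor} as a direct consequence of Theorem~\ref{thm:main} and Lemma~\ref{ERidemlem}, with no new diagrammatic work. The only hypothesis of Theorem~\ref{thm:main} that is dropped in the corollary is the unitality of $b$, and Lemma~\ref{ERidemlem} shows precisely that, for a connected Frobenius algebra in a linear category, a nonzero idempotent satisfying the exchange relations is automatically unital. So the entire proof is a matter of matching up hypotheses.

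For the forward implication I would argue as follows. Suppose $b \in \End_{\mathcal C}(X)$ arises from a Frobenius subalgebra, say $b = b_{X'} = i \circ i^*$ for a unital algebra monomorphism $i : X' \to X$ as in Definition~\ref{Frobsubalgdefn}. Then $b$ is a unital selfdual idempotent (this is recorded in Definition~\ref{Frobsubalgdefn} itself), so Theorem~\ref{thm:main} applies verbatim and $b$ satisfies the exchange relations. Note that this direction uses neither connectedness nor linearity, and is literally the forward implication of Theorem~\ref{thm:main}.

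For the converse, suppose $b \in \End_{\mathcal C}(X)$ is a selfdual idempotent satisfying the exchange relations. If $b \neq 0$, then $\mathcal C$ linear, $X$ connected, and $b$ a nonzero idempotent satisfying the exchange relations are exactly the hypotheses of the ``moreover'' clause of Lemma~\ref{ERidemlem}, which yields that $b$ is unital; Theorem~\ref{thm:main} then shows $b$ arises from a Frobenius subalgebra. The remaining possibility $b = 0$ is degenerate: either one regards it as coming from the zero Frobenius subalgebra, or (when $X \neq 0$, where $b \circ e_X = 0 \neq e_X$) one simply notes this case does not occur among the idempotents of interest; I would dispose of it in a single sentence.

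I do not expect any genuine obstacle: all the substantive content --- the idempotent splitting $b = u \circ v$ in the Karoubian completion, the transported Frobenius structure $(Y, m_Y, e_Y, \delta_Y, \epsilon_Y)$, and the verification that $u$ realizes $Y$ as a Frobenius subalgebra --- has already been carried out in the proof of Theorem~\ref{thm:main}, and the passage from idempotency to unitality is exactly the pictorial identity $b^2 = c_b\, b$ of Lemma~\ref{ERidemlem}. The only point requiring a moment's care is hypothesis management (the corollary adds ``linear'' to the ``abelian'' of Theorem~\ref{thm:main}, since connectedness is phrased via $\mathrm{Hom}_{\mathcal C}(\mathbbm{1}, X) \cong \mathbbm{k}$) together with the trivial $b = 0$ case noted above. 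As a closing remark one could point out that, in this connected setting, such $b$ are biprojections in the sense of Definition~\ref{def:bipro}, linking back to \S\ref{biprojER}, though this is not needed for the proof.
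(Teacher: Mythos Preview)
Your proposal is correct and matches the paper's own proof essentially verbatim: reduce to Theorem~\ref{thm:main} by supplying the missing unitality of $b$, using Definition~\ref{Frobsubalgdefn} for the forward direction and the ``moreover'' clause of Lemma~\ref{ERidemlem} for the converse. Your handling of the degenerate case $b=0$ is in fact more careful than the paper's, which silently invokes Lemma~\ref{ERidemlem} without isolating the nonzero hypothesis.
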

 
 \begin{proof}
 By Theorem~\ref{thm:main}, it remains to show that the idempotent $b \in \End_{\mathcal{C}}(X)$ is always unital. 
 If $b$ satisfies the exchange relations, this follows from the last statement of Lemma~\ref{ERidemlem}. 
 If $b$ comes from a Frobenius subalgebra, it holds by Definition~\ref{Frobsubalgdefn}.
 %
 %
 \end{proof}
 
 Recall the notion of the Fourier transform from Definition~\ref{def:fourier}. 
 
 \begin{cor} \label{cor:cocentral}
 Let $X$ be a Frobenius algebra in a Karoubian monoidal category $\mathcal{C}$. 
 Every unital selfdual idempotent whose Fourier transform is central arises from a Frobenius subalgebra.
 \end{cor}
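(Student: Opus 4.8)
The plan is to reduce the statement to Theorem~\ref{thm:main} by checking that the hypothesis on the Fourier transform already forces the exchange relations. So let $b : X \to X$ be a unital selfdual idempotent such that $\mathcal{F}(b)$ is central in $\End_{\mathcal{C}}(X \otimes X)$.

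First I would invoke Proposition~\ref{normalERprop}, which says precisely that a selfdual morphism whose Fourier transform is central satisfies the exchange relations. Here $b$ is selfdual and $\mathcal{F}(b)$ is central by assumption, so $b$ satisfies the exchange relations. It is worth noting that we do not need $b$ to be normal in the full sense of Definition~\ref{def:normalbipro} (which would also require $b$ itself to be central): only the centrality of $\mathcal{F}(b)$ enters Proposition~\ref{normalERprop}, and that is exactly what is hypothesized.

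At this point $b$ is a unital selfdual idempotent satisfying the exchange relations, so all the standing hypotheses of Theorem~\ref{thm:main} are in place: unitality, selfduality, and idempotency are part of the statement, and the exchange relations are supplied by the previous paragraph. Theorem~\ref{thm:main} then applies verbatim and produces a Frobenius subalgebra $Y$ of $X$, realized by a unital algebra monomorphism $u : Y \to X$, with $b = b_Y = u \circ u^*$. This is the desired conclusion.

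There is essentially no obstacle here: the entire content is already packaged in Proposition~\ref{normalERprop} and Theorem~\ref{thm:main}, and the corollary is just the composition of the two implications. The only thing one must be careful about is to confirm that no extra hypothesis of Theorem~\ref{thm:main} has been dropped --- in particular that idempotency is genuinely assumed (it is) rather than something one would need to deduce, which would otherwise send us through Lemma~\ref{ERidemlem}.
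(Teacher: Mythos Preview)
Your proof is correct and follows exactly the paper's approach: invoke Proposition~\ref{normalERprop} to obtain the exchange relations, then apply Theorem~\ref{thm:main}. The paper's own proof is the single sentence ``This follows directly from Proposition~\ref{normalERprop} and Theorem~\ref{thm:main},'' and your argument is simply a more detailed unpacking of that line.
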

 \begin{proof}
 This follows directly from Proposition~\ref{normalERprop} and Theorem~\ref{thm:main}.
 \end{proof}
 
 Next, recall the notion of a normal biprojection introduced in Definitions~\ref{def:bipro} and~\ref{def:normalbipro}.
 
 \begin{cor} \label{cor:normalbi}
 Let $X$ be a Frobenius algebra in a linear Karoubian monoidal category $\mathcal{C}$. 
 Then every normal biprojection $b \in \End_{\mathcal{C}}(X)$ arises from a Frobenius subalgebra.
 \end{cor}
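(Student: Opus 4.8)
The plan is to deduce this directly from Corollary~\ref{cor:cocentral}, after unpacking what a normal biprojection actually is. Recall from Definition~\ref{def:bipro} that a biprojection $b : X \to X$ is by definition a selfdual unital idempotent (it is moreover convolution-stable, but that extra property will play no role here), and from Definition~\ref{def:normalbipro} that $b$ is \emph{normal} precisely when both $b$ and its Fourier transform $\mathcal{F}(b)$ are central, in $\End_{\mathcal{C}}(X)$ and $\End_{\mathcal{C}}(X \otimes X)$ respectively. In particular $\mathcal{F}(b)$ is central. Hence a normal biprojection is, verbatim, a unital selfdual idempotent whose Fourier transform is central.

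With this observation the statement is immediate: Corollary~\ref{cor:cocentral} asserts that every unital selfdual idempotent with central Fourier transform arises from a Frobenius subalgebra, so $b$ does. I would also record the underlying route, in case one prefers not to cite Corollary~\ref{cor:cocentral}: since $b$ is selfdual and $\mathcal{F}(b)$ is central, Proposition~\ref{normalERprop} shows that $b$ satisfies the exchange relations; then, $b$ being in addition a unital idempotent, Theorem~\ref{thm:main} yields that $b$ arises from a Frobenius subalgebra.

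I do not expect any genuine obstacle here — the proof is a matter of matching hypotheses. The one point to be careful about is that unitality, selfduality and idempotency are all built into the definition of a biprojection, so the hypotheses of Corollary~\ref{cor:cocentral} (equivalently, of Proposition~\ref{normalERprop} together with Theorem~\ref{thm:main}) are met with nothing left to prove, while convolution-stability, the remaining defining clause of a biprojection, is never used. (One could even observe, via Lemma~\ref{ERidemlem}, that once $b$ satisfies the exchange relations its unitality already forces idempotency, so the latter is redundant in the input.) The linearity hypothesis on $\mathcal{C}$ enters only to make sense of centrality in the endomorphism algebras, which is exactly the data needed to invoke Proposition~\ref{normalERprop}.
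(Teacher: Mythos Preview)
Your proof is correct and matches the paper's approach exactly: the paper's own proof is the single sentence ``This is an immediate consequence of Corollary~\ref{cor:cocentral}.'' Your only minor slip is the final parenthetical---linearity is required not to make sense of centrality (which is meaningful in any endomorphism monoid) but because the definition of a biprojection (Definition~\ref{def:bipro}) invokes a nonzero scalar~$\lambda$.
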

 \begin{proof}
 This is an immediate consequence of Corollary~\ref{cor:cocentral}.
 \end{proof}
 
\comments{We now present a connected unitary analogue of Theorem~\ref{thm:main} (see Remark~\ref{rk:DualVsAdjoint}).}
 
A classical result in subfactor theory~\cite{B94,BJ00} states that if $N \subseteq M$ is an irreducible, finite-index inclusion of type $\mathrm{II}_1$ factors, then a unital self-adjoint idempotent in $N' \cap M_1$ arises from an intermediate subfactor if and only if it satisfies the exchange relations. 
Theorem~\ref{thm:uniMain}, which is the connected unitary analogue of Theorem~\ref{thm:main}, can be deduced from this classical result by noting that every unitary Frobenius algebra is realized as the standard invariant of some irreducible, finite-index subfactor. We provide instead a purely categorical and self-contained proof.

 \begin{thm}\label{thm:uniMain}
 Let $X$ be a connected unitary Frobenius algebra in a unitary tensor category $\mathcal{C}$. 
 A unital self-adjoint idempotent $b:X\to X$ arises from a unitary Frobenius subalgebra if and only if it satisfies the exchange relations.
 \end{thm}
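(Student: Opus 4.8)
The plan is to deduce this from Theorem~\ref{thm:main} and, more precisely, from the construction carried out in its proof, the only genuinely new ingredients being that in a unitary tensor category a self-adjoint idempotent splits through an \emph{isometry}, and that---by Remark~\ref{rk:DualVsAdjoint}---for a connected unitary Frobenius algebra the Frobenius dual $(-)^{*}$ coincides with the categorical adjoint $(-)^{\dagger}$. The second point makes ``selfdual'' synonymous with ``self-adjoint'' and ``algebra monomorphism with $i^{*}\circ i=\id$'' synonymous with ``isometric algebra monomorphism'', so that the diagrammatic arguments behind Lemmas~\ref{FrobAlgY} and~\ref{subalglem} can be reread essentially word for word.

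For the forward implication I would argue as follows. Suppose $b$ comes from a unitary Frobenius subalgebra: $Y$ is a unitary Frobenius algebra and $i\colon Y\to X$ is an isometric unital algebra morphism with $b=i\circ i^{\dagger}$. The self-dualities on $X$ and $Y$ induced by their unitary Frobenius structures satisfy $\ev^{\dagger}=\coev$, so Remark~\ref{rk:DualVsAdjoint} gives $i^{*}=i^{\dagger}$; hence $i^{*}\circ i=\id_{Y}$ and $i^{**}=i$. From $i\circ e_{Y}=e_{X}$ together with $\epsilon=e^{\dagger}$ on both objects one also gets $\epsilon_{X}\circ i=\epsilon_{Y}$, i.e.\ $i$ is counital. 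Thus $i$ realises $Y$ as a Frobenius subalgebra of $X$ in the sense of Definition~\ref{Frobsubalgdefn}, with associated idempotent $b_{Y}=i\circ i^{*}=b$, and $b$ is a unital selfdual idempotent ($b^{\dagger}=b$, hence $b^{*}=b$ by Remark~\ref{rk:DualVsAdjoint}; $b\circ e_{X}=i\circ i^{*}\circ e_{X}=i\circ e_{Y}=e_{X}$; $b^{2}=i\circ(i^{*}\circ i)\circ i^{*}=b$). Hence $b$ satisfies the exchange relations by the forward implication of Theorem~\ref{thm:main}.

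For the converse, let $b\colon X\to X$ be a unital self-adjoint idempotent satisfying the exchange relations. Since $\mathcal{C}$ is a unitary tensor category, the orthogonal projection $b$ splits through an isometry: there is an object $Y$ and an isometry $u\colon Y\to X$ with $u^{\dagger}\circ u=\id_{Y}$ and $u\circ u^{\dagger}=b$. Set $v:=u^{\dagger}$, so that $b=u\circ v$, $v\circ u=\id_{Y}$, $b\circ u=u$, $v\circ b=v$, and equip $Y$ with the morphisms $m_{Y},e_{Y},\delta_{Y},\epsilon_{Y}$ defined by the formulas in the proof of Theorem~\ref{thm:main} for this pair $(u,v)$. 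The proofs of Lemmas~\ref{FrobAlgY} and~\ref{subalglem} use only $v\circ u=\id_{Y}$, $b\circ u=u$, $v\circ b=v$, the unitality and exchange relations for $b$, and the Frobenius axioms for $X$, so $Y$ is a Frobenius subalgebra of $X$ via $u$, with $u^{*}=v$. It remains to upgrade this to the unitary setting: using $\delta_{X}=m_{X}^{\dagger}$, $\epsilon_{X}=e_{X}^{\dagger}$ (as $X$ is a unitary Frobenius algebra), $v=u^{\dagger}$ and the contravariant involutivity of $\dagger$, one computes
\[
m_{Y}^{\dagger}=\bigl(v\circ m_{X}\circ(u\ot u)\bigr)^{\dagger}=(v\ot v)\circ m_{X}^{\dagger}\circ u=(v\ot v)\circ\delta_{X}\circ u=\delta_{Y},\qquad e_{Y}^{\dagger}=(v\circ e_{X})^{\dagger}=e_{X}^{\dagger}\circ u=\epsilon_{X}\circ u=\epsilon_{Y},
\]
so $(Y,m_{Y},e_{Y},\delta_{Y},\epsilon_{Y})$ is a unitary Frobenius algebra. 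Since $u$ is moreover an isometry and a unital algebra morphism with $u^{*}=v=u^{\dagger}$, it exhibits $Y$ as a unitary Frobenius subalgebra of $X$, with associated idempotent $u\circ u^{\dagger}=b$; hence $b$ arises from a unitary Frobenius subalgebra.

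The step I expect to carry the real weight is the very first one in the converse: replacing the abstract Karoubian splitting used for Theorem~\ref{thm:main} by an isometric splitting, which is where the unitarity of $\mathcal{C}$ enters, hand in hand with the identification $(-)^{*}=(-)^{\dagger}$ for the connected unitary Frobenius algebra $X$---the role of the connectedness hypothesis, matching the normalisation in the irreducible-subfactor-planar-algebra picture of~\cite{BJ00,L02}. Once these are in place, the unitarity of the induced Frobenius structure on $Y$ is precisely the short adjoint computation displayed above, and everything else is inherited verbatim from the proof of Theorem~\ref{thm:main}. If ``unitary Frobenius algebra'' is understood to carry an additional specialness/standardness normalisation, one must further check that $Y$ inherits it from $X$, which again follows from the exchange relations for $b$ together with the corresponding property of $X$ by the same style of diagram chase.
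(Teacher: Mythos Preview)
Your converse is essentially the paper's: split $b$ through an isometry via unitary idempotent completion, rerun the construction from the proof of Theorem~\ref{thm:main} with $v=u^{\dagger}$, and verify $m_Y^{\dagger}=\delta_Y$, $e_Y^{\dagger}=\epsilon_Y$ so that $Y$ is unitary.

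Your forward implication, however, contains a genuine gap and also diverges from the paper's route. The gap is your claim that Remark~\ref{rk:DualVsAdjoint} identifies the Frobenius dual $(-)^{*}$ with the categorical adjoint $(-)^{\dagger}$; that remark says nothing of the sort (it concerns the role of connectedness), and the identification fails in general---already for $M_n(\mathbb{C})$ in $\mathrm{Hilb}$ the Frobenius dual is the transpose for the trace pairing while $\dagger$ is the Hilbert--Schmidt adjoint. Hence you cannot conclude $i^{*}=i^{\dagger}$ and cannot invoke Theorem~\ref{thm:main} as a black box. The paper proceeds quite differently: it first shows $b*b=\lambda_{Y}\,b$ (using that $i$ is an algebra morphism and $i^{\dagger}$ a coalgebra morphism), and then proves each exchange relation by writing the difference $A$ of its two sides and computing $A^{\dagger}A=0$ via a trace identity from \cite[Lemma~2.23]{GP25}---this positivity-and-trace step is precisely where connectedness enters.

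Your underlying idea can nonetheless be repaired, and then gives a genuinely different argument. Since $m^{\dagger}=\delta$ on both $X$ and $Y$, taking $\dagger$ of the algebra-morphism identity $m_X\circ(i\otimes i)=i\circ m_Y$ shows that $i^{\dagger}$ is a coalgebra morphism; the diagram chase displayed at the beginning of the proof of Theorem~\ref{thm:main} then goes through verbatim with $i^{\dagger}$ in place of $i^{*}$, never invoking $i^{*}=i^{\dagger}$. This sidesteps the paper's convolution/positivity computation altogether and, notably, does not appear to use connectedness for the forward direction---in line with the open status recorded in Remark~\ref{rk:DualVsAdjoint}.
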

 
 \begin{proof}
 Suppose $b$ arises from a unitary Frobenius subalgebra $Y \subseteq X$, i.e., there exists a unital algebra monomorphism $i: Y \to X$ such that $i^\dagger \circ i = \id_Y$ (an isometry) and $b = i \circ i^\dagger$ (cf.~the proof of~\cite[Lemma~4.28]{GP25}). 
 Recall from~\cite[Definition~4.25]{GP25} that a unitary Frobenius algebra satisfies $m^\dagger = \delta$ and $e^\dagger = \epsilon$. 
 Applying the adjoint $\dagger$ to the algebra morphism identity $m \circ (i \otimes i) = i \circ m$ shows that $i^\dagger$ is a coalgebra morphism. 
 The diagram below shows that $b$ is convolution-stable:
 %
 \vspace*{-4mm}
 \[b * b = \raisebox{-11mm}{
 \begin{tikzpicture}
 \draw[blue,in=90,out=90,looseness=2] (0,0) to (1,0);
 \draw[blue,in=-90,out=-90,looseness=2] (0,0) to (1,0);
 \draw[blue] (.5,-.6) to (.5,-1.2);
 \draw[blue] (.5,.6) to (.5,1.2);
 \node[draw,thick,rounded corners, fill=white,minimum width=20,scale=.8] at (0,0){$b$};
 \node[draw,thick,rounded corners, fill=white,minimum width=20,scale =.8] at (1,0){$b$}; 
 \end{tikzpicture}}
 =
 \raisebox{-11mm}{
 \begin{tikzpicture}
 \draw[blue,in=90,out=90,looseness=2] (0,0) to (1,0);
 \draw[blue,in=-90,out=-90,looseness=2] (0,0) to (1,0);
 \draw[blue] (.5,-.6) to (.5,-1.2);
 \draw[blue] (.5,.6) to (.5,1.2);
 \node[draw,thick,rounded corners, fill=white,minimum width=20,scale=.7] at (0,.2){$i^\dagger$};
 \node[draw,thick,rounded corners, fill=white,minimum width=20,scale=.7] at (1,.2){$i^\dagger$};
 \node[draw,thick,rounded corners, fill=white,minimum width=20,scale=.7] at (0,-.3){$i$};
 \node[draw,thick,rounded corners, fill=white,minimum width=20,scale=.7] at (1,-.3){$i$}; 
 \end{tikzpicture}}
 = 
 \raisebox{-11mm}{
 \begin{tikzpicture}
 \draw[blue,in=90,out=90,looseness=2] (0,0) to (1,0);
 \draw[blue,in=-90,out=-90,looseness=2] (0,0) to (1,0);
 \draw[blue] (.5,-.6) to (.5,-1.2);
 \draw[blue] (.5,.6) to (.5,1.2);
 \node[draw,thick,rounded corners, fill=white,minimum width=20,scale=.7] at (.5,.9){$i^\dagger$};
 \node[draw,thick,rounded corners, fill=white,minimum width=20,scale=.7] at (.5,-.9){$i$}; 
 \end{tikzpicture}} 
 = \lambda_Y b, 
 \vspace*{-1mm}
 \]
 where $\lambda_Y$ arises from the separability of $Y$ (since $Y$ is connected, like $X$, and hence separable by \cite[Lemma~2.16]{GP25}). 
 The third equality holds because $i$ (respectively $i^\dagger$) is an algebra (respectively coalgebra) morphism. 
 The exchange relations assert that $A = B = 0$, where
 \vspace*{-1mm}
 \[A:=
 \raisebox{-10mm}{
 \begin{tikzpicture}
 \draw[blue,in=90,out=90,looseness=2] (-0.5,0.5) to (-1.5,0.5);
 \draw[blue] (-1,1.1) to (-1,2);
 \draw[blue] (-.5,.5) to (-.5,-.2);
 \draw[blue] (-1.5,.5) to (-1.5,-.2);
 \node[draw,thick,rounded corners, fill=white,minimum width=15,scale=.8] at (-1.5,.4){$b$};
 \node[draw,thick,rounded corners, fill=white,minimum width=15,scale=.8] at (-1,1.6){$b$};
 \end{tikzpicture}}
 -
 \raisebox{-10mm}{
 \begin{tikzpicture}
 \draw[blue,in=90,out=90,looseness=2] (-0.5,0.5) to (-1.5,0.5);
 \draw[blue] (-1,1.1) to (-1,2);
 \draw[blue] (-.5,.5) to (-.5,-.2);
 \draw[blue] (-1.5,.5) to (-1.5,-.2);
 \node[draw,thick,rounded corners, fill=white,minimum width=15,scale=.8] at (-1.5,.4){$b$};
 \node[draw,thick,rounded corners, fill=white,minimum width=15,scale=.8] at (-.5,.4){$b$};
 \end{tikzpicture}} 
 \text{ and }
 B:=
 \raisebox{-10mm}{
 \begin{tikzpicture}
 \draw[blue,in=90,out=90,looseness=2] (-0.5,0.5) to (-1.5,0.5);
 \draw[blue] (-1,1.1) to (-1,2);
 \draw[blue] (-.5,.5) to (-.5,-.2);
 \draw[blue] (-1.5,.5) to (-1.5,-.2);
 \node[draw,thick,rounded corners, fill=white,minimum width=15,scale=.8] at (-.5,.4){$b$};
 \node[draw,thick,rounded corners, fill=white,minimum width=15,scale=.8] at (-1,1.6){$b$};
 \end{tikzpicture}}
 -
 \raisebox{-10mm}{
 \begin{tikzpicture}
 \draw[blue,in=90,out=90,looseness=2] (-0.5,0.5) to (-1.5,0.5);
 \draw[blue] (-1,1.1) to (-1,2);
 \draw[blue] (-.5,.5) to (-.5,-.2);
 \draw[blue] (-1.5,.5) to (-1.5,-.2);
 \node[draw,thick,rounded corners, fill=white,minimum width=15,scale=.8] at (-1.5,.4){$b$};
 \node[draw,thick,rounded corners, fill=white,minimum width=15,scale=.8] at (-.5,.4){$b$};
 \end{tikzpicture}} 
 \vspace*{-1mm}
 \]
 By unitarity, it suffices to show $A^\dagger A = B^\dagger B = 0$. 
 Since $b$ is convolution-stable idempotent,
 \vspace*{-1mm}
 \[
 A^{\dagger}A = 
 \raisebox{-11mm}{
 \begin{tikzpicture}
 \draw[blue,in=90,out=90,looseness=2] (0,0) to (1,0);
 \draw[blue,in=-90,out=-90,looseness=2] (0,0) to (1,0);
 \draw[blue] (.5,-.6) to (.5,-1.2);
 \draw[blue] (.5,.6) to (.5,1.2);
 \node[draw,thick,rounded corners, fill=white,minimum width=20,scale=.7] at (0,0){$b$};
 \node[draw,thick,rounded corners, fill=white,minimum width=20,scale=.7] at (.5,.9){$b$};
 \node[draw,thick,rounded corners, fill=white,minimum width=20,scale=.7] at (.5,-.9){$b$};
 \end{tikzpicture}}
 -
 \raisebox{-11mm}{
 \begin{tikzpicture}
 \draw[blue,in=90,out=90,looseness=2] (0,0) to (1,0);
 \draw[blue,in=-90,out=-90,looseness=2] (0,0) to (1,0);
 \draw[blue] (.5,-.6) to (.5,-1.2);
 \draw[blue] (.5,.6) to (.5,1.2);
 \node[draw,thick,rounded corners, fill=white,minimum width=20,scale=.7] at (0,0){$b$};
 \node[draw,thick,rounded corners, fill=white,minimum width=20,scale =.7] at (1,0){$b$}; 
 \node[draw,thick,rounded corners, fill=white,minimum width=20,scale=.7] at (.5,-.9){$b$};
 \end{tikzpicture}}
 -
 \raisebox{-11mm}{
 \begin{tikzpicture}
 \draw[blue,in=90,out=90,looseness=2] (0,0) to (1,0);
 \draw[blue,in=-90,out=-90,looseness=2] (0,0) to (1,0);
 \draw[blue] (.5,-.6) to (.5,-1.2);
 \draw[blue] (.5,.6) to (.5,1.2);
 \node[draw,thick,rounded corners, fill=white,minimum width=20,scale=.7] at (0,0){$b$};
 \node[draw,thick,rounded corners, fill=white,minimum width=20,scale =.7] at (1,0){$b$}; 
 \node[draw,thick,rounded corners, fill=white,minimum width=20,scale=.7] at (.5,.9){$b$}; 
 \end{tikzpicture}}
 +
 \raisebox{-11mm}{
 \begin{tikzpicture}
 \draw[blue,in=90,out=90,looseness=2] (0,0) to (1,0);
 \draw[blue,in=-90,out=-90,looseness=2] (0,0) to (1,0);
 \draw[blue] (.5,-.6) to (.5,-1.2);
 \draw[blue] (.5,.6) to (.5,1.2);
 \node[draw,thick,rounded corners, fill=white,minimum width=20,scale=.7] at (0,0){$b$};
 \node[draw,thick,rounded corners, fill=white,minimum width=20,scale =.7] at (1,0){$b$}; 
 \end{tikzpicture}} 
 = 
 \raisebox{-11mm}{
 \begin{tikzpicture}
 \draw[blue,in=90,out=90,looseness=2] (0,0) to (1,0);
 \draw[blue,in=-90,out=-90,looseness=2] (0,0) to (1,0);
 \draw[blue] (.5,-.6) to (.5,-1.2);
 \draw[blue] (.5,.6) to (.5,1.2);
 \node[draw,thick,rounded corners, fill=white,minimum width=20,scale=.7] at (0,0){$b$};
 \node[draw,thick,rounded corners, fill=white,minimum width=20,scale=.7] at (.5,.9){$b$};
 \node[draw,thick,rounded corners, fill=white,minimum width=20,scale=.7] at (.5,-.9){$b$};
 \end{tikzpicture}}
 - \lambda_Y b.
 \vspace*{-1mm} 
 \]
 By connectedness and \cite[Lemma~2.24]{GP25},
 \(
 \mu_X A^{\dagger}A = (\trace(b) - \mu_X \lambda_Y)\, b,
 \)
 where $\mu_X := \epsilon_X \circ e_X$. By \cite[Lemma~2.19]{GP25}, 
 \(
 \trace(b) = \trace(i \circ i^{\dagger}) = \trace((i^{\dagger})^{**} \circ i).
 \)
 Since $i^{\dagger} \circ i = \id_Y$ and $(i^{\dagger})^* = (i^*)^{\dagger}$ by \cite[Lemma~4.28]{GP25},
 \(
 \trace(b) = \trace(\id_Y) = \mu_Y \lambda_Y
 \)
 by \cite[Lemma~2.23]{GP25}. Next, 
 \[
 \mu_Y = \epsilon_Y \circ e_Y 
 = \epsilon_Y \circ \id_Y \circ e_Y 
 = \epsilon_Y \circ i^{\dagger} \circ i \circ e_Y 
 = \epsilon_X \circ e_X 
 = \mu_X,
 \]
 since $i$ is unital (and hence $i^{\dagger}$ is counital). Therefore, 
 \(
 \mu_X A^{\dagger}A = 0.
 \)
 But $\mu_X = e_X^{\dagger} \circ e_X$ is nonzero, hence $A^{\dagger}A = 0$. 
 Similarly, $B^{\dagger}B = 0$ since $\trace(b^*) = \trace(b)$ as $Y^* = Y$.
 
 
 Conversely, assume that $b$ is a self-adjoint unital idempotent in $\mcal C$ satisfying the exchange relations. 
 By applying the unitary idempotent completion~\cite{CPJP22} to $b$ in the unitary tensor category $\mcal C$, we get an object $Y$ and an isometry $i : Y \to X$ such that $b = i \circ i^\dagger$ and $i^\dagger \circ i = \id_Y$.
 The rest of the proof follows verbatim from the converse part of Theorem~\ref{thm:main}, except that self-adjointness and the graphical calculus for unitary tensor categories are used in place of selfduality and the graphical calculus for Karoubian monoidal categories, respectively.
 \end{proof}
 \begin{question} \label{Qu:UnitarySub}
 Can the connectedness assumption in Theorem~\ref{thm:uniMain} be omitted?
 \end{question}
 
 \begin{remark} \label{rk:DualVsAdjoint}
 The connectedness condition is unnecessary for the backward implication. It remains unclear whether the forward implication holds without it---in agreement with the last sentence of \cite[Remark~3.4(2)]{B94}. Surprisingly, Theorem~\ref{thm:main} does not require connectedness in its general setting with selfdual idempotents, whereas Theorem~\ref{thm:uniMain} does, in its unitary setting with self-adjoint idempotents. If Question~\ref{Qu:UnitarySub2} has a positive answer, we could replace \emph{self-adjoint} by \emph{selfdual} in Theorem~\ref{thm:uniMain} and omit the connectedness assumption.
 \end{remark}
 
 \begin{question} \label{Qu:UnitarySub2}
 Is every Frobenius subalgebra of a unitary Frobenius algebra itself unitary?
 \end{question}
 
 \comments{ As a first application of Theorem~\ref{thm:uniMain}, we recover \cite[Proposition~3.6]{BJ00}.
 
 \begin{cor} \label{cor:intervN}
 Let $N \subseteq M$ be an irreducible finite-index inclusion of $\rm{II}_1$ factors. 
 A unital self-adjoint idempotent in $N' \cap M_1$ arises from an intermediate subfactor if and only if it satisfies the exchange relations.
 \end{cor}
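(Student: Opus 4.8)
The plan is to obtain Corollary~\ref{cor:intervN} as the special case of Theorem~\ref{thm:uniMain} in which $\mcal C$ is the unitary tensor category ${}_N\mathrm{Bim}_N$ generated by the bimodule ${}_NL^2(M)_N$ and $X={}_NL^2(M)_N$ is its canonical Q-system. First I would assemble the standard facts from Jones--Longo--Izumi theory: for a finite-index inclusion $N\subseteq M$ of ${\rm II}_1$ factors, $X={}_NL^2(M)_N$ carries the structure of a unitary Frobenius algebra (a Q-system) in $\mcal C$, with multiplication induced by the $M$-bimodule structure and comultiplication its adjoint; moreover irreducibility of $N\subseteq M$ says $N'\cap M=\C$, which translates to $\mathrm{Hom}_{\mcal C}(\mathbbm 1,X)\cong\C$, i.e.\ $X$ is connected. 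So the hypotheses of Theorem~\ref{thm:uniMain} hold.

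Next I would set up the dictionary on endomorphisms. Jones' basic construction identifies $\End_{\mcal C}(X)=\End_{N\text{-}N}({}_NL^2(M)_N)$ with the relative commutant $N'\cap M_1$ as a $*$-algebra, and under this identification a unital self-adjoint idempotent $b:X\to X$ is exactly a unital projection in $N'\cap M_1$. Two further matchings are needed: (i) the categorical graphical calculus for $X$ must be matched with the planar-algebra / Temperley--Lieb calculus on the $2$-box space, so that the exchange relations of Definition~\ref{ERdefn} become the Bisch--Jones exchange relations of \cite{BJ00} inside $N'\cap M_1$; and (ii) unitary Frobenius subalgebras of $X$ must be matched with intermediate subfactors. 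For (ii) I would use the correspondence $P\mapsto {}_NL^2(P)_N$, which realizes each intermediate subfactor $N\subseteq P\subseteq M$ as a sub-Q-system of $X$ — the operator-algebraic instance of the categorical correspondence underlying \cite{GP25} — and check that the associated idempotent $b_{{}_NL^2(P)_N}=i\circ i^\dagger$ of Definition~\ref{Frobsubalgdefn} is precisely the element of $N'\cap M_1$ attached to $P$ (the suitably normalized Jones projection $e_P$, up to the index rescaling). Conversely, one must confirm that every unitary Frobenius subalgebra of $X$ arises this way from a genuine intermediate subfactor; this is where irreducibility and finite index enter.

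With the dictionary in place, Theorem~\ref{thm:uniMain} reads: a unital self-adjoint idempotent $b:X\to X$ arises from a unitary Frobenius subalgebra iff it satisfies the exchange relations; transporting both sides through (i) and (ii) yields exactly Corollary~\ref{cor:intervN}, thereby recovering \cite[Proposition~3.6]{BJ00}. I expect the main obstacle to be bookkeeping rather than anything conceptual: normalizing the Q-system and the invariant pairing so that the categorical ``multiplication'' and ``comultiplication'' on $X$ land on the correct operations in $N'\cap M_1$, and verifying that the categorical exchange relations reproduce the Bisch--Jones relations on the nose rather than merely up to a scalar.
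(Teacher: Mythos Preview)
Your proposal is correct and matches the paper's approach: the paper states Corollary~\ref{cor:intervN} as a direct application of Theorem~\ref{thm:uniMain} without further proof, implicitly relying on exactly the Q-system dictionary you describe (the bimodule category ${}_N\mathrm{Bim}_N$, the connected unitary Frobenius algebra $X={}_NL^2(M)_N$, the identification $\End_{\mcal C}(X)\cong N'\cap M_1$, and the correspondence between intermediate subfactors and unitary Frobenius subalgebras). Your outline is in fact more detailed than what the paper provides.
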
}
 
 We recall the notion of an averaging operator on a Banach algebra:
 
 \begin{defn}[\cite{M66}] \label{def:average}
 Let $\mathcal{A}$ be a Banach algebra over the complex field, and let $\mathcal{B}(\mathcal{A})$ denote the Banach algebra of all bounded linear operators on $\mathcal{A}$. An operator $T \in \mathcal{B}(\mathcal{A})$ satisfying the exchange relations, i.e. $T(T(a)b) = T(a)T(b) = T(aT(b))$, for all $a$, $b \in \mathcal{A}$, is called an \emph{averaging operator}.
 \end{defn}
 
 \begin{cor}\label{cor:C*algthm}
 Let $(A \subseteq B, E)$ be an irreducible finite-index unital inclusion of C*-algebras. 
 Then an $A$-bimodule unital self-adjoint morphism $T : {_A}B_A \to {_A}B_A$ arises from an intermediate C*-subalgebra if and only if it is an averaging operator.
 \end{cor}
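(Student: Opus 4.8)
The plan is to obtain the corollary as a C*-algebraic translation of \Cref{thm:uniMain}. First I would recall that a finite-index conditional expectation $E : B \to A$ equips $B$ with a quasi-basis, which makes the C*-correspondence ${_A}B_A$ a bifinite object $X$ of the unitary tensor category $\mathcal{C}$ generated by the finite-index $A$--$A$-correspondences. The multiplication of $B$ and the unital inclusion $A \hookrightarrow B$ furnish the morphisms $m$ and $e$, while a quasi-basis of $E$ furnishes $\delta$ and $\epsilon$; up to the usual index normalization this is the standard Q-system associated with the inclusion, i.e.\ a connected unitary Frobenius algebra structure on $X$ in the sense of \cite{GP25}, with $m^{\dagger} = \delta$ and $e^{\dagger} = \epsilon$ (see \cite{L94,M03}). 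Connectedness here is exactly irreducibility, since $\mathrm{Hom}_{\mathcal{C}}(\mathbbm{1}, X)$ is the space of $A$-central elements of $B$, namely $A' \cap B = \mathbb{C}$. Hence \Cref{thm:uniMain} applies to $X$.

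Next I would set up the dictionary between the two sides of the statement. An $A$-bimodule morphism $T : {_A}B_A \to {_A}B_A$ is precisely an element $b \in \End_{\mathcal{C}}(X)$; that $T$ is unital, i.e.\ $T(1_B) = 1_B$, says $b \circ e_X = e_X$, and that $T$ is self-adjoint for the $A$-valued inner product says $b^{\dagger} = b$. By \Cref{def:average}, $T$ is an averaging operator if and only if $b$ satisfies the exchange relations, and by \Cref{ERidemlem} a unital such $b$ is automatically idempotent, so the hypotheses of \Cref{thm:uniMain} are met. It remains to identify ``$T$ arises from an intermediate C*-subalgebra'' with ``$b$ arises from a unitary Frobenius subalgebra of $X$''. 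Given an intermediate C*-algebra $A \subseteq C \subseteq B$, the inclusion $A \subseteq C$ is again of finite index, so there is a conditional expectation $E^{B}_{C} : B \to C$; writing $i : {_A}C_A \hookrightarrow {_A}B_A$ for the unital algebra monomorphism given by inclusion, $E^{B}_{C}$ is its adjoint $i^{\dagger}$, and $E^{B}_{C}|_{C} = \id_{C}$ gives $i^{\dagger} \circ i = \id$, so ${_A}C_A$ is a unitary Frobenius subalgebra and $b = i \circ i^{\dagger}$ is the composite of $E^{B}_{C}$ with $C \hookrightarrow B$, i.e.\ the operator attached to $C$. Conversely, a unitary Frobenius subalgebra $i : Y \to X$ is, through $i$, a unital $*$-algebra monomorphism into $B$ whose image contains $A$; since $i$ is an isometry the image $C := i(Y)$ is norm-closed, hence an intermediate C*-subalgebra, with $b = i \circ i^{\dagger}$ coming from it. Chaining these equivalences with \Cref{thm:uniMain} gives: $T$ arises from an intermediate C*-subalgebra iff $b$ arises from a unitary Frobenius subalgebra of $X$ iff $b$ satisfies the exchange relations iff $T$ is an averaging operator, which is the assertion.

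I expect the main obstacle to be the C*-side bookkeeping rather than any analytic estimate. One must (i) check that the quasi-basis formulas genuinely satisfy the Q-system/Frobenius axioms and that the categorical adjoint in $\mathcal{C}$ matches the conditional-expectation adjoint on ${_A}B_A$ — this I would cite from \cite{L94,M03,GP25} — and (ii) justify that an intermediate C*-algebra of a finite-index inclusion carries a conditional expectation from $B$ (here the finite-index hypothesis, and the C*- rather than merely Banach-algebra setting, are what make this possible), and dually that the unitary idempotent completion used in \Cref{thm:uniMain} realizes every unitary Frobenius subalgebra of $X$ as a norm-closed $*$-subalgebra lying between $A$ and $B$. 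As in \Cref{thm:uniMain}, irreducibility is needed only for the ambient inclusion, not for the intermediate C*-algebra.
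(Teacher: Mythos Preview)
Your proposal is correct and follows essentially the same route as the paper: realize ${_A}B_A$ as a connected unitary Frobenius algebra in a unitary tensor category of $A$-bimodules, translate the hypotheses on $T$ into the categorical ones, and invoke \Cref{thm:uniMain}. The paper's proof is much terser---it simply applies \Cref{thm:uniMain} in $\mathrm{Bim}(A)$ and cites the realization/Q-system completion machinery of \cite{CPJP22} for the bijection between intermediate C*-subalgebras and unitary Frobenius subalgebras of ${_A}B_A$, rather than spelling out the two directions via conditional expectations and isometric inclusions as you do; your more explicit dictionary (and your use of \Cref{ERidemlem} to recover idempotency, which the corollary does not assume) is a welcome unpacking of what that citation encodes.
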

 
 \begin{proof}
 Consider the unitary tensor category $\mathrm{Bim}(A)$, whose objects are $A$-bimodules and morphisms are adjointable. 
 Applying Theorem~\ref{thm:uniMain} to the unitary Frobenius algebra $_A B_A$ in $\mathrm{Bim}(A)$ yields the desired result, since every intermediate C*-algebra corresponds to a unitary Frobenius subalgebra of $_A B_A$ (via the realization functor described in \cite{CPJP22}).
 \end{proof}
 \
 %
 %
 %
 
 \section{Appendix} \label{sec:appendix}
 
 \subsection{SageMath code and computations} \label{sub:sage}
 This section provides a computer-assisted proof of Propositions~\ref{prop:bipro} and~\ref{prop:bipro2}. 
 Using \texttt{SageMath}, we verify that in Lemma~\ref{lem:12345}, the equations~\eqref{lem_5} lie in the ideal generated by equations~\eqref{lem_1}--\eqref{lem_4}. To this end, we use the function \texttt{make\_eqs} defined below.
 {\footnotesize
 \begin{verbatim}
 from itertools import product
 def make_eqs(n):
 names = [f'A_{i}_{j}_{k}_{l}' for i,j,k,l in product(range(1,n+1),repeat=4)] + ['x']
 R = PolynomialRing(QQ, names)
 gens = R.gens()
 A = { p: gens[i] for i,p in enumerate(product(range(1,n+1),repeat=4)) }
 x = gens[-1]
 eqs1 = [ A[i,j,k,l] - A[l,k,j,i] for i,j,k,l in product(range(1,n+1),repeat=4) ]
 eqs2 = [ sum(A[i,i,k,l] for i in range(1,n+1)) - (1 if k==l else 0) 
 for k,l in product(range(1,n+1),repeat=2) ]
 eqs3 = [ sum(A[i,j,r,s]*A[r,s,k,l] for r,s in product(range(1,n+1),repeat=2))
 - A[i,j,k,l] for i,j,k,l in product(range(1,n+1),repeat=4) ]
 eqs4 = [ sum(A[i,r,k,s]*A[r,j,s,l] for r,s in product(range(1,n+1),repeat=2))
 - x*A[i,j,k,l] for i,j,k,l in product(range(1,n+1),repeat=4) ]
 eqs5 = []
 for i,j,k,l,r,s in product(range(1,n+1),repeat=6):
 t1 = sum(A[i,j,t,k]*A[t,l,r,s] for t in range(1,n+1))
 t2 = sum(A[i,j,r,t]*A[k,l,t,s] for t in range(1,n+1))
 t3 = sum(A[k,l,j,t]*A[i,t,r,s] for t in range(1,n+1))
 eqs5.append(t1 - t2); eqs5.append(t2 - t3)
 return R, eqs1, eqs2, eqs3, eqs4, eqs5
 
 \end{verbatim}}
 We now present the proof of Proposition~\ref{prop:bipro} in the case $n = 2$:
 {\small
 \begin{verbatim}
 sage: R, eqs1, eqs2, eqs3, eqs4, eqs5 = make_eqs(2) # for n=2
 sage: I = R.ideal(eqs1 + eqs2 + eqs3 + eqs4)
 sage: all(eq in I for eq in eqs5)
 True 
 \end{verbatim}}
 In fact, the algebraic set of biprojections is $2$-dimensional, whereas the one of selfdual unital ER idempotents is $3$-dimensional:
 {\small
 \begin{verbatim}
 sage: J = R.ideal(eqs1 + eqs2 + eqs3 + eqs5)
 sage: [I.dimension(), J.dimension()]
 [2,3]
 \end{verbatim}}
 We now present the proof of Proposition~\ref{prop:bipro2} in the case $n = 2$:
 {\small
 \begin{verbatim}
 sage: (x-1/2)*(x-1)*(x-2) in I
 True
 sage: Ihalf = R.ideal(eqs1 + eqs2 + eqs3 + eqs4 + [x-1/2])
 sage: Ihalf.vector_space_dimension()
 1 # only one solution
 sage: I1 = R.ideal(eqs1 + eqs2 + eqs3 + eqs4 + [x-1])
 sage: I1.dimension()
 2 # 2-dim set of solutions
 sage: I2 = R.ideal(eqs1 + eqs2 + eqs3 + eqs4 + [x-2])
 sage: I2.vector_space_dimension()
 1 # only one solution
 \end{verbatim}}
 The computation for $n = 3$ proceeds similarly.
 \begin{ac} 
 The first author is supported by the BJNSF (Grant No. 1S24063). The second author is supported by the NSFC (Grant No.12471031).
 \end{ac}

 \end{document}